\documentclass[12pt, reqno]{amsart}
\usepackage{amsmath, amsthm, amscd, amsfonts, amssymb, graphicx, xcolor}
\usepackage[bookmarksnumbered, colorlinks, plainpages]{hyperref}
\usepackage{amsmath}
\usepackage{cases}
\usepackage{amssymb}
\usepackage{mathrsfs}
\usepackage{euscript}
\usepackage{dsfont}
\usepackage{graphicx}
\usepackage{float}
\allowdisplaybreaks[4]
\newtheorem{theorem}{Theorem}[section]

\newtheorem{lemma}{Lemma}[section]

\newtheorem{corollary}[theorem]{Corollary}
\theoremstyle{definition}

\newtheorem{conjecture}[theorem]{Conjecture}

\theoremstyle{remark}

\numberwithin{equation}{section}

\begin{document}
	\setcounter{page}{1}
	

	
	
	\title{On the largest prime factors less than $y$ of consecutive shifted primes}

	\author[]{Zhiyuan Yang}
	
	\address{School of Mathematics,  Shandong University, Jinan 250100, Shandong, China}
	\email{\tt zhiyuan.yang@mail.sdu.edu.cn}
	
	
	

	
	
	\begin{abstract}
		For an integer $n > 1$, let $P^+(n)$ be the largest prime factor of $n$, and let \(P_y^+(n)\) denote the largest prime factor of $n$ not exceeding $y$. One of Erd\H{o}s and Turán's
		conjectures asserts that the asymptotic density of integers $n$ satisfying $P^+(n)<P^+(n+1)$ is $1/2$. Furthermore, Wang conjectures that for $x\rightarrow\infty$, one has $\#\{p\leq x:P^+(p-1)<P^+(p+1)\}\sim \pi(x)/2$. In this paper, we show the following results. 
		For any $3\leq y\ll x^{o(1)}$, for $x\rightarrow\infty$, we have
		\begin{align*}
			&\#\{n\leq x:P^+_y(n)<P^+_y(n+1)\}\sim \frac{1}{2}x,  \\&\#\{p\leq x:P^+_y(p-1)<P^+_y(p+1)\}\sim \frac{1}{2}\pi(x).
		\end{align*}
		For any $0<\alpha<17/32$, let $y=x^\alpha$. Then there exists $h(\alpha)>0$ such that
		\begin{align*}
			\#\{p\leq x:P_y^+(p-1)<P_y^+(p+1)\}\geq(h(\alpha)+o(1))\pi(x).
		\end{align*} 
		In particular, the function $h$ satisfies $\lim_{\alpha\rightarrow 0^+}h(\alpha)=1/2$. Similar result also holds for $\#\{n\leq x:P_y^+(n)<P_y^+(n+1)\}$. 
		
		These improve Rivat's result (2001) and Wang's result (2019).
		\newline
		\newline
		\noindent \textit{Keywords.} Selberg's sieve, prime, the largest prime factor
		\newline
		\noindent 
	\end{abstract} \maketitle
	

	\section{Introduction}
	
	Let $P^+(n)$ denote the largest prime factor of $n$ with the convention that $P^+(1)=1$. In the 1930s, Erd\H{o}s and Turán [8] formulated the following conjecture in a correspondence.
	\begin{conjecture}
		(Erd\H{o}s-Turán) For $x\rightarrow\infty$, we have
		\begin{align*}
			\#\{n\leq x:P^+(n)<P^+(n+1)\}\sim \frac{1}{2}x.
		\end{align*}
	\end{conjecture}
	In 1978, Erd\H{o}s and Pomerance [3] first proved that there exists a positive asymptotic density of integers $n$ such that $P^+(n)<P^+(n+1)$. In fact, they showed that\begin{align*}
		\#\{n\leq x:P^+(n)<P^+(n+1)\}>0.0099x.
	\end{align*}
	In 2005, the asymptotic density was improved to 0.05544 by La Bretèche, Pomerance and Tenenbaum [2], and to 0.05866 by Fouvry's arguments in ``Further remarks" of the same paper. Later, the constant 0.05866 was improved successively to 0.1063 and 0.1356 by Wang in [12, 13]. In 2025, Lü and Wang [6] improved this constant to 0.2017. Very recently, the author [17, 18] showed that the asymptotic density is larger than 0.280 and 0.299. 
	
	On the other hand, this problem has generated a long sequence of unconditional density advances, together with logarithmic-density, averaged-shift, and conditional forms [5, 7, 9, 11, 16]. We mainly introduce the work of Rivat. In 2001, Rivat in [7] proved a $P^+_y(n)$-version of the Erd\H{o}s-Turán conjecture for some small $y$, where $P^+_y(n)$ denotes the greatest prime factor $p$ of $n$ satisfying $p\leq y$. In fact, Rivat's results implies that for $3\leq y\leq \exp(\log x/(100\log\log x))$, we have
	\begin{align*}
		\#\{n\leq x:P^+_y(n)<P^+_y(n+1)\}\sim\frac{1}{2}x.
	\end{align*}
	In [13], Wang give a generalized version of Conjecture 1.1:
	For $x\rightarrow\infty$ and $3\leq y\leq x$, one has
	\begin{align*}
		\#\{n\leq x:P_y^+(n)<P_y^+(n+1)\}\sim \frac{1}{2}x\ ?
	\end{align*}
	In the same paper, Wang proved that for any $0<\alpha\leq 1$ and $y=x^\alpha$, there exists $C(\alpha)>0$ such that
	\begin{align*}
		\#\{n\leq x:P^+_y(n)<P^+_y(n+1)\}\geq (C(\alpha)+o(1))x.
	\end{align*}
	However, the value of $C(\alpha)$ obtained in that case decreases to $0$ as $\alpha$ decreases to $0$, which is contrary to the expectation that it should be easier to solve the problem as $\alpha$ becomes smaller.
	In 2019, Wang [15] find a new function $C_0(\alpha)>0$ for $0<\alpha\leq 1/6$ such that
	\begin{align*}
		\#\{n\leq x:P^+_y(n)<P^+_y(n+1)\}\geq (C_0(\alpha)+o(1))x,
	\end{align*}
	where the function $C_0(\alpha)$ satisfies $\lim_{\alpha\rightarrow 0^+}C_0(\alpha)=1/4$. However, the range \(x^{o(1)}\gg y>\exp(\log x/(100\log\log x))\) remains unexplored. In this paper, we fill this gap and improve the above limit to \(1/2\).

	We are also interested in the analogue of Conjecture 1.1 for prime numbers. A famous question is the twin prime conjecture.
	\begin{conjecture}
		(twin prime) For $x\rightarrow\infty$, we have
		\begin{align*}
			\#\{p\leq x:P^+(p)<P^+(p+2)\}\rightarrow \infty.
		\end{align*}
	\end{conjecture}
	In [14], Wang gave a  symmetric‑form conjecture for primes.
	\begin{conjecture}
		(Wang) For $x\rightarrow\infty$, we have
		\begin{align*}
			\#\{p\leq x:P^+(p-1)<P^+(p+1)\}\sim \frac{1}{2}\pi(x).
		\end{align*}
	\end{conjecture}
	This problem seemly remains very difficult to solve. In [14], under the Elliott-Halberstam conjecture, Wang proved that there exists a positive asymptotic density of integers $p$ such that $P^+(p-1)<P^+(p+1)$.
	In fact, he showed the asymptotic density is larger than $0.1779$. If we replace $p$ by almost‑primes $P_2$, this problem has been independently solved by the author and Wang (private communication). In fact, for any $0<\alpha\leq 1$ and $y=x^\alpha$, there exists $z>x^{\varepsilon}$ such that
	\begin{align*}
		\#\{P_2\leq x:P^-(P_2)>z,P^+_y(P_2-1)<P^+_y(P_2+1)\}\gg \pi(x),
	\end{align*}
	where $P^-(n)$ denote the 
	smallest prime factor of $n$.
	Naturally, we may conjecture that
	for $x\rightarrow\infty$ and $3\leq y\leq x$, one has
	\begin{align*}
		\#\{p\leq x:P_y^+(p-1)<P_y^+(p+1)\}\sim \frac{1}{2}\pi(x).
	\end{align*}
	
	In this paper, we obtain the following results. Statements like $f(x)\ll x^{o(1)}g(x)$ should be read as $\forall \varepsilon>0$, $f(x)\ll_\varepsilon x^\varepsilon g(x)$.
	
	\begin{theorem}
		For $3\leq y\ll x^{o(1)}$, we have
		\begin{align*}
			\#\{p\leq x:P_y^+(p-1)<P_y^+(p+1)\sim \frac{1}{2}\pi(x).
		\end{align*}
		The lower bound is also true for the pattern $P_y^+(p-1)>P_y^+(p+1)$.
	\end{theorem}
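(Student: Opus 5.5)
The plan is to exploit the symmetry $p\mapsto -p$, which interchanges $p-1$ and $-(p+1)$, together with a sieve whose level of distribution comes from Bombieri--Vinogradov. The guiding observation is that $P_y^+(n)$ depends only on the divisibility of $n$ by primes $q\le y$. The local conditions "$q\mid p-1$" and "$q\mid p+1$" are exchanged by the involution $a\mapsto -a$ on reduced residues modulo $q$, and this involution preserves the density of primes in each class. So the expected main terms for the two patterns $P_y^+(p-1)<P_y^+(p+1)$ and $P_y^+(p-1)>P_y^+(p+1)$ should coincide. It then suffices to show:
\begin{enumerate}
\item ties $P_y^+(p-1)=P_y^+(p+1)$ are negligible;
\item each pattern can be counted asymptotically with a main term that is visibly symmetric.
\end{enumerate}

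\textbf{Ties.} If $P_y^+(p-1)=P_y^+(p+1)=q$, then $q\mid 2$. Hence a tie forces $p^2-1$ to have no odd prime factor $\le y$. An upper-bound sieve (Selberg, dimension $2$) in the primes gives at most
\[
\ll \pi(x)\prod_{3\le q\le y}\Bigl(1-\frac{2}{q-1}\Bigr)\ll \frac{\pi(x)}{(\log y)^2}
\]
such $p$, which is $o(\pi(x))$ once $y\to\infty$.

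\textbf{Bounded $y$.} When $y$ stays bounded the sieve bound above is useless, so we argue exactly instead. Let $M=\prod_{q\le y}q^{k_q}$ with $k_q$ chosen large, depending on $y$, so that powers of $q$ affect nothing beyond a negligible set. The map $a\mapsto -a$ on $(\mathbb Z/M)^\times$ swaps the two patterns, because $P_y^+(-n)=P_y^+(n)$. The tie set is a union of classes of small total density. The prime number theorem in progressions modulo a fixed $M$ then gives the asymptotic $\frac12\pi(x)$ directly. The case $y=3$ is the model: $p\equiv 1\pmod 3$ gives one pattern and $p\equiv 2\pmod 3$ the other.

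\textbf{$y\to\infty$ with $y\le x^{o(1)}$.} Here I decompose according to the pair $(q_1,q_2)=(P_y^+(p-1),P_y^+(p+1))$ with $q_1\neq q_2$. The count $N(q_1,q_2)$ of $p\le x$ with $q_1\mid p-1$, $q_2\mid p+1$, $(p-1)/q_1$ free of primes in $(q_1,y]$, and $(p+1)/q_2$ free of primes in $(q_2,y]$ is a sieve problem. It lives in the primes $p\equiv a\pmod{q_1q_2}$, with sieving set $\mathcal P\subset (q_1,y]$ of dimension at most $2$. Its level of distribution is $D=x^{1/2-\varepsilon}/(q_1q_2)\ge x^{1/2-\varepsilon}y^{-2}$ by Bombieri--Vinogradov, applied on average over moduli $q_1q_2d$. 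Since $y=x^{o(1)}$, the sieve parameter $s=\log D/\log y$ tends to infinity. The fundamental lemma therefore gives
\[
N(q_1,q_2)=\bigl(1+O(e^{-s})\bigr)\,\frac{\pi(x)}{\varphi(q_1q_2)}\,V(q_1,q_2)+E(q_1,q_2),
\]
where $V$ is a product of local densities over primes in $(\min(q_1,q_2),y]$. One must also handle the higher powers $q_i^2\mid p\mp1$ consistently; they do not break the symmetry.

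The key point is that each local factor is invariant under $p\mapsto -p$. Consequently the main term for "$q_1\mid p-1$, $q_2\mid p+1$ with the sifting conditions" equals the main term with the roles of $p-1$ and $p+1$swapped. Summing over $q_1<q_2$ versus $q_1>q_2$ then yields equal main terms for the two patterns. The errors total $\sum_{q_1,q_2}E(q_1,q_2)\ll \pi(x)(\log x)^{-A}$ by Bombieri--Vinogradov, and $e^{-s}\pi(x)=o(\pi(x))$. Together with the tie bound, each pattern has count $\sim\frac12\pi(x)$. As a cleaner alternative, one can avoid summing over pairs by sieving once over all $q\le y$ with a symmetric weight, since both patterns are unions of events determined by $p$ modulo products of primes $\le y$.

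\textbf{Main obstacle.} I expect the hardest step to be making the fundamental lemma uniform over all pairs $(q_1,q_2)$, including very small $q_1$ where the sieving range $(q_1,y]$ is long. The point is to ensure that the remainder $O(e^{-s})$ times the main term, summed over pairs, is $o(\pi(x))$. I would first try to verify this via $\sum_{q_1,q_2}V(q_1,q_2)/\varphi(q_1q_2)\ll 1$, which holds because these are essentially probabilities of disjoint events. I would also check that the transition between bounded and growing $y$ is covered, for instance by letting $y\to\infty$ arbitrarily slowly in the sieve argument and using the exact residue-class argument otherwise.
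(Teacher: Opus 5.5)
Your argument is correct, and it takes a genuinely different route from the paper's.

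\textbf{How the paper argues.} The paper splits at $y=\log\log x$. For $3\le y\le\log\log x$ it applies Siegel--Walfisz modulo $P(y)\le(\log x)^2$ and evaluates the main term by an explicit M\"obius computation. This reduces to the identity $\sum_{2<p'\le y}\frac{2}{p'-1}\prod_{p'<p\le y}\frac{p-3}{p-1}=1$. For $\log\log x<y\le x^{o(1)}$ it never evaluates a full main term. Instead it proves the one-sided bound $\mathscr{S}_A-\mathscr{S}_B\ge(\frac12-\frac{c^2}{2}-5c\varepsilon+o(1))\pi(x)$ by truncated Bonferroni inclusion--exclusion restricted to primes in $(y^c,y]$. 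The bounded depth keeps all moduli below $y^{J}=x^{o(1)}$, so Bombieri--Vinogradov applies. The main terms come from Mertens' theorem and an integral; the paper repeats this for the reverse pattern and concludes by complementarity as $c\to0$.

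\textbf{How your route differs.} You replace all main-term computation with the involution $a\mapsto -a$ on $(\mathbb{Z}/P(y))^\times$. You replace Bonferroni with the fundamental lemma at level $x^{1/2-\varepsilon}/y^2$, which shows uniformly in $(q_1,q_2)$ that the counts follow the local densities.

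\textbf{What each route buys.} Yours gives a conceptual reason for the constant $\frac12$ and an asymptotic for the full joint distribution of $(P_y^+(p-1),P_y^+(p+1))$. Once ties are handled as below, it is a single argument for the whole range. For $y=x^\alpha$ with small $\alpha$ it would even give $\frac12+O(e^{-s})$ two-sidedly, with $s\asymp 1/\alpha$. The paper's route needs only bounded-depth inclusion--exclusion rather than the fundamental lemma. Its $\mathscr{S}_A-\mathscr{S}_B$ framework is also what Section 4 reuses.

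\textbf{One point to fix: ties never occur.} Ties are not merely rare; they are impossible for $p>3$. A tie forces $P_y^+(p-1)=P_y^+(p+1)=2$, but $3\mid p^2-1$ and $3\le y$. Your own product already contains the factor $1-\frac{2}{3-1}=0$. This matters in your bounded-$y$ paragraph. If the tie classes had a fixed positive density $\tau$, the involution would give $\frac{1-\tau}{2}\pi(x)$, not $\frac12\pi(x)$, so ``small total density'' must be ``zero'' there. The same vanishing factor at $p=3$ is what makes the paper's identity above equal $1$.

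With this noted, the tie sieve, the separate bounded case and the transition worry all disappear. The fundamental-lemma argument only improves as $y$ decreases, since $s$ grows, so it covers all $3\le y\le x^{o(1)}$ at once.

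\textbf{Minor simplifications.} $M=P(y)$ suffices, since $P_y^+(n)$ depends only on $n \bmod P(y)$; higher prime powers play no role. In the remainder estimate, account for the weights $\omega(d)\le 2^{\nu(d)}$ and for the at most $\nu(q)^2$ representations $q=q_1q_2d$. Handle both by Cauchy--Schwarz against the trivial bound before invoking Bombieri--Vinogradov.
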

	
	\begin{theorem}
		For $3\leq y\ll x^{o(1)}$, we have
		\begin{align*}
			\#\{n\leq x:P_y^+(n)<P_y^+(n+1)\sim \frac{1}{2}x.
		\end{align*} 
		The lower bound is also true for the pattern $P_y^+(n)>P_y^+(n+1)$.
	\end{theorem}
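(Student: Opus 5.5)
Theorem 1.2 (integers $n$) is the easier analogue of Theorem 1.1 and follows the same scheme. The plan is to show that $P_y^+(n)=P_y^+(n+1)$ almost never happens, and that the remaining pairs split symmetrically.

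\emph{Reduction to equality.} Since $\gcd(n,n+1)=1$, the equality $P_y^+(n)=P_y^+(n+1)$ forces both to equal $1$. That is, both $n$ and $n+1$ have no prime factor $\le y$. For $y\ge 3$ this is impossible, because one of $n,n+1$ is even. Hence
\begin{align*}
\#\{n\le x:P_y^+(n)<P_y^+(n+1)\}+\#\{n\le x:P_y^+(n)>P_y^+(n+1)\}=x+O(1).
\end{align*}
It therefore suffices to prove the lower bound $(\tfrac12+o(1))x$ for each pattern. The upper bound then follows by subtracting the other pattern from $x$.

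\emph{Lower bound.} Fix the pattern $P_y^+(n)<P_y^+(n+1)$ and write $q=P_y^+(n+1)$. We group by $q$ and by the value $p'=P_y^+(n)$, so that $p'<q\le y$. The natural approach is to count, for each prime $q\le y$, the $n\le x$ with $q\mid n+1$, $P_y^+(n+1)=q$, and $P_y^+(n)<q$. The condition on $n+1$ says that $n+1=qm$ with $m$ having no prime factors in $(q,y]$. The condition on $n$ says that $n$ has no prime factors in $[q,y]$. Apart from the divisibility $q\mid n+1$, these are sieve conditions on $m$ and on $qm-1$ with sifting set $\{r: q<r\le y\}$. The moduli involved are products of primes $\le y$. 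One must restrict to squarefree parts of size at most $x^{1/2}$ or so, which is allowed since $y=x^{o(1)}$: the fundamental lemma of sieve theory (Brun's sieve or a Selberg-type fundamental lemma) with level $D=x^{\varepsilon}$ and sifting limit $y$ gives $s=\log D/\log y\to\infty$. The count then equals the expected density up to a factor $1+O(e^{-s})$, uniformly in $q$. The expected density is
\begin{align*}
\frac{x}{q}\prod_{q<r\le y}\Bigl(1-\frac{2}{r}\Bigr)\cdot(\text{local factor at }q).
\end{align*}
This is exactly the probability under a model in which $n$ is uniformly random mod $\prod_{r\le y}r$.

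\emph{Symmetry in the model.} In that model, the pairs of residues $(n\bmod r,(n+1)\bmod r)$ are independent across primes $r$. The events $\{P^+_y(n)<P^+_y(n+1)\}$ and $\{P^+_y(n)>P^+_y(n+1)\}$ are exchanged by the map $n\mapsto -1-n$, which swaps $n$ and $n+1$ modulo every $r$. Hence the model probabilities are equal, and since they sum to $1$, each is $\tfrac12$. Summing the fundamental lemma estimates over $q$ therefore gives $(\tfrac12+o(1))x$. The main obstacle is uniformity over large $q$ close to $y$, and over the regime where the sifting range $(q,y]$ is long. I would handle this by truncating $q\le y^{1-\delta}$ or similar only if needed. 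Then I would use that the total error is $\sum_q (x/q)\,O(e^{-s}) + O(D^{2})\ll x\,e^{-s}\log\log y+x^{2\varepsilon}$, which is $o(x)$ once $s\gg \log\log\log y$. This holds since $y=x^{o(1)}$ makes $s\to\infty$ as fast as we like.

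For Theorem 1.1 the same argument runs with $n=p-1$. Equality is now possible only if $P_y^+(p\pm1)$ coincide, which again fails by parity. The sieve counts become counts of primes $p\equiv \pm1$ modulo $\prod r$, restricted to moduli up to $x^{\varepsilon}$. These are handled by Bombieri--Vinogradov, or even Siegel--Walfisz with a single modulus level $x^{o(1)}$ and exceptional-zero care. The symmetry is $p\mapsto -p$ on residues coprime to $r$.
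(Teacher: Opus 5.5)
Your argument is correct in substance and takes a genuinely different route from the paper; one error-term step is wrong as written but has a one-line repair, described below. You write the count exactly as $\sum_{q\le y}S_q$ with $S_q=\#\{n\le x:\ n\equiv -1 \pmod q,\ (n(n+1),P(q,y))=1\}$. You then evaluate each $S_q$ by the fundamental lemma for a two-dimensional sieve. The main term comes from the identity $\sum_{q\le y}\frac{1}{q}\prod_{q<r\le y}(1-\frac{2}{r})=\frac12$, which your involution $n\mapsto -1-n$ proves. It also telescopes directly, since $\frac{2}{q}\prod_{q<r\le y}(1-\frac{2}{r})=\prod_{q<r\le y}(1-\frac{2}{r})-\prod_{q\le r\le y}(1-\frac{2}{r})$ and the factor at $r=2$ vanishes.

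The paper instead splits at $y=\log\log x$. For small $y$ it does full M\"obius inversion with moduli dividing $P(y)\le(\log x)^2$ and checks the same identity by an explicit telescoping computation (the prime version is (3.2), (3.3) and the display after it). For larger $y$ it introduces a threshold $y^{c}$ and bounds $\mathscr{S}_A$ from below and $\mathscr{S}_B$ from above by truncated Bonferroni inequalities. This gives only $(\frac12-O(c^2)-O(c\varepsilon))x$ for each pattern, and the asymptotic follows by letting $c\to0$ and using the opposite pattern.

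Your route treats the whole range $3\le y\le x^{o(1)}$ uniformly and gives both bounds at once with an explicit error term. With $D=x^{1/2}$ it even yields $(\frac12+O(e^{-1/(2\alpha)})+o(1))x$ for $y=x^{\alpha}$ with $\alpha$ small, a quantitative form of $C(\alpha)\to\frac12$ in Corollary 1.7. The paper's route needs only inclusion--exclusion and Mertens-type sums, at the cost of a case split and a limiting argument in $c$.

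The one step that fails as written is your final error count. You bound $\sum_{q\le y}\frac{x}{q}O(e^{-s})\ll xe^{-s}\log\log y$ and then assert that $y=x^{o(1)}$ lets $s$ grow ``as fast as we like''. It does not: $s=\log D/\log y$ with $D\le x$, so the hypothesis only gives $s\to\infty$ at an unspecified rate. For instance, $\log y=\log x/\log\log\log\log x$ forces $s\le\log\log\log\log x$ while $\log\log y\sim\log\log x$, and then $xe^{-s}\log\log y$ is not $o(x)$.

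The repair is already in your setup. The fundamental-lemma error is relative to the main term $\frac{x}{q}\prod_{q<r\le y}(1-\frac{2}{r})$, not to $\frac{x}{q}$, and these main terms sum to exactly $\frac{x}{2}$. So the total sieve error is $O(xe^{-s})=o(x)$, and no truncation in $q$ is needed. You should also sum the remainder terms over $q$: with $|r_d|\ll 2^{\nu(d)}$ this gives $\ll yD\log D=x^{1/2+o(1)}$ for $D=x^{1/2}$.

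Your closing remarks on primes lie outside this statement, but two corrections apply there. The pair is $p\pm1$, both even, so $P_y^+(p-1)=P_y^+(p+1)$ is excluded by $3\mid p^2-1$ rather than by parity. Also, Siegel--Walfisz does not reach moduli of size $x^{o(1)}$; Bombieri--Vinogradov is what is needed.
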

	In particular, for any $\varepsilon>0$, there exists $\delta=\delta(\varepsilon)>0$ such that the following holds: 
	\begin{align*}
		&\Big|\#\{n\leq x:P_y^+(n)<P_y^+(n+1)-\frac{x}{2}\Big|\leq \varepsilon x,\\&\Big|\#\{p\leq x:P_y^+(p-1)<P_y^+(p+1)-\frac{\pi(x)}{2}\Big|\leq \varepsilon \pi(x),
	\end{align*}
	for $3\leq y\leq x^{\delta}$ and $x\rightarrow\infty$.
	
	\begin{theorem}
		For any $0<\alpha<17/32$, let $y=x^\alpha$. Then there exists $h(\alpha)>0$ satisfying
		\begin{align*}
			\#\{p\leq x:P_y^+(p-1)<P_y^+(p+1)\geq(h(\alpha)+o(1))\pi(x).
		\end{align*} 
		In particular, the function $h$ satisfies
		\begin{align*}
			\lim_{\alpha\rightarrow 0^+}h(\alpha)=\frac{1}{2},\quad \lim_{\alpha\rightarrow (17/32)^-}h(\alpha)=0.
		\end{align*}
	\end{theorem}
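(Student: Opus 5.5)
Our plan for Theorem 1.3 is to reduce the comparison of $P_y^+(p-1)$ and $P_y^+(p+1)$ to the event that one specific large prime $q\le y$ divides one of $p\pm1$ while no prime in $(q,y]$ divides the other. Since $y=x^\alpha$, we parametrize $q=x^{t}$ with $t\le\alpha$. We use the disjoint decomposition
\begin{align*}
\#\{p\le x: P_y^+(p-1)<P_y^+(p+1)\}\ \ge\ \sum_{\substack{q\le y}}\#\{p\le x:\ q\mid p+1,\ P_y^+(p+1)=q,\ (p-1,\textstyle\prod_{q\le \ell\le y}\ell)=1\}.
\end{align*}
The events are disjoint in $q$ because $q=P_y^+(p+1)$. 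For each $q$ we will bound the inner count from below by a lower-bound sieve (Selberg's or the linear/beta sieve) applied to the set $\{p\le x: p\equiv -1 \pmod q\}$. We sift $p-1$ by primes in $[q,y]$ and $(p+1)/q$ by primes in $(q,y]$. In the lower bound we may also restrict to a conveniently chosen range of $q$, say $q\in[x^{\alpha-\eta},y]$ or a full range of $t$ subject to level constraints.

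The key input is a Bombieri--Vinogradov type equidistribution for primes in progressions to moduli $qd$, where $d$ is built from the sieve weights. The level available is where the threshold $17/32$ should come from. We will use results on primes in progressions with well-factorable or triply-well-factorable weights of the Bombieri--Friedlander--Iwaniec/Maynard type, of level $x^{1/2+\delta}$ when one factor (here $q$) is fixed and the sieve weights are well factorable. The combined modulus $q\cdot d$ must stay below the admissible level $x^{\theta}$. The sieve dimension is 2, from sifting both $p-1$ and $(p+1)/q$. Write $\sigma$ for the sifting range exponent relative to the level. Then the sieve lower bound is positive with main term
\[
\frac{\pi(x)}{\varphi(q)}\prod_{q\le \ell\le y}\Bigl(1-\frac{2}{\ell-1}\Bigr)\cdot f_2(s),\qquad \prod\approx \Bigl(\frac{\log q}{\log y}\Bigr)^{2},
\]
where $f_2$ is the lower sieve function of dimension 2. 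Summing over $q=x^t$ gives $h(\alpha)=\int (t/\alpha)^2 f_2(s(t))\,dt/t$, suitably normalized. Positivity holds as long as the sifting limit of the 2-dimensional sieve is compatible with $\alpha<17/32$, and $h$ vanishes at the endpoint.

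For the limit $\alpha\to0^+$ we argue as follows. When $y$ is tiny compared to the level, $s\to\infty$, so $f_2(s)\to1$ and the sieve becomes asymptotic. The sum over $q$ then reproduces the heuristic probability
\[
\sum_{q\le y}\frac{1}{q-1}\prod_{q\le\ell\le y}\Bigl(1-\frac{2}{\ell-1}\Bigr)\approx\int_0^1 u^2\,\frac{du}{u}\cdot 2\cdot\frac{1}{2}\to\frac12 .
\]
Here the loss from the event that both largest-below-$y$ primes coincide, or that $p\pm1$ have no prime factor in the relevant range, tends to $0$. This is consistent with Theorem 1.1, which we may use as the $\alpha\to0$ model. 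We must also handle the small primes $\ell<q$ carefully via the fundamental lemma, with the local factor at $2$ adjusted.

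The main obstacle is the equidistribution step. We must control $\sum_{q}\sum_{d}\lambda_d\,E(x;qd,a)$ with $q$ as large as $x^{\alpha}$ and $d$ ranging over sieve weights, which needs a level beyond $1/2$ for the product modulus. We would first try to factor $\lambda_d$ (Iwaniec's well-factorable linear sieve weights, combined as a product for dimension two) and apply the Bombieri--Friedlander--Iwaniec/Maynard theorem with the prime $q$ absorbed as one factor. The exponent $17/32$ would emerge from optimizing those factorization constraints. If that fails, the fallback is plain Bombieri--Vinogradov with level $1/2$, which gives a weaker range of $\alpha$.
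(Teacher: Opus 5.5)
There is a genuine gap. Your small-$\alpha$ argument is sound. With the fundamental lemma and Bombieri--Vinogradov at level $x^{1/2-\varepsilon}/q$, your main term is $\pi(x)\sum_{2<q\le y}\frac{1}{q-1}\prod_{q<\ell\le y}\bigl(1-\frac{2}{\ell-1}\bigr)$. By the identity at the end of Case 1 in the proof of Theorem 1.4, this equals exactly $\pi(x)/2$, so $\lim_{\alpha\to0^+}h(\alpha)=1/2$ follows, and this is a cleaner substitute for the paper's truncated inclusion--exclusion.

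\textbf{The gap: positivity on the whole range $\alpha<17/32$.} Sieve weights of level $D$ cannot detect sifting primes $\ell>D$: if every sifting prime exceeds $D$, the only admissible lower-bound weight is trivial. So to impose that no $\ell\in(q,y]$ divides $p\mp1$, you need equidistribution of primes to moduli $q\ell$, i.e.\ a level of at least $qy$. For $q$ near $y$, which carries most of your main term, that level is $x^{2\alpha}$. Once $\alpha$ exceeds the available level, it fails for every $q$. Nothing beyond Bombieri--Vinogradov is available here, for two reasons.
\begin{itemize}
\item Your moduli $qd_1d_2$ carry the class $-1$ modulo $qd_1$ and $+1$ modulo $d_2$, which varies with the modulus. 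The Bombieri--Friedlander--Iwaniec and Maynard theorems require a fixed residue class.
\item For $q>x^{1/2+\varepsilon}$, only order-of-magnitude bounds for $\pi(x;q,-1)$ are known, valid for almost all prime $q$. These are not the remainder estimates a sieve needs.
\end{itemize}
So the hope that $17/32$ ``emerges from factorization constraints'' has no basis, and your fallback covers only a short initial range of $\alpha$.

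\textbf{The missing idea.} Use equidistribution only for a single prime modulus, and bound everything else from above. Write the count as at least $\mathscr{S}_A-\mathscr{S}_B$, where $\mathscr{S}_A=\#\{p\le x:P^+_y(p+1)>x^c\}$ and $\mathscr{S}_B=\#\{p\le x:x^c<P^+_y(p+1)<P^+_y(p-1)\}$. Bonferroni gives $\mathscr{S}_A\ge\sum_{x^c<p_1\le x^\alpha}\pi(x;p_1,-1)-\mathscr{R}$.
\begin{itemize}
\item For $\alpha<1/2$, this sum is handled by Bombieri--Vinogradov.
\item For $\alpha$ bounded away above $1/2$, take $c>1/2$, so that $\mathscr{R}=0$. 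Then use Lemma 2.5, which gives $\pi(x;q,-1)\ge C_1(\theta)\pi(x)/\varphi(q)$ for almost all primes $q\sim x^{\theta}$ with $\theta<17/32$. This lemma is the sole source of $17/32$.
\end{itemize}
The terms $\mathscr{R}$ and $\mathscr{S}_B$ need only upper bounds, and these exist at every size.
\begin{itemize}
\item When $p_1p_2\le x^{1-\varepsilon}$, use Brun--Titchmarsh.
\item Otherwise, write $p+1=lp_1p_2$, or $p+1=2ap_1$ and $p-1=2bp_2$, so that the large primes become the sifted linear forms. Then apply Selberg's upper-bound sieve (Lemma 2.1), and sum the cofactors $l,a,b$ using the mean values of $H$ and $H_1$.
\end{itemize}
Choosing $c$ close to $\alpha$, the main term is of first order in $\log(\alpha/c)$, while the subtracted upper bounds are of second order; near $\alpha=1/2$ one also splits off $P^+_y(p-1)\ge x^{c_1}$. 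This gives $h(\alpha)>0$. Unless you replace your lower-bound sieve by ``single-modulus main term minus upper-bound sieves'', the method cannot reach $17/32$, or even $1/2$.
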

	
	\begin{corollary}
		For $3\leq y\leq x$, we have
		\begin{align*}
			\#\{n\leq x:P_y^+(n)<P_y^+(n+1)\gg x.
		\end{align*}
		For $3\leq y\leq x^{17/32-\varepsilon}$, we have
		\begin{align*}
			\#\{p\leq x:P_y^+(p-1)<P_y^+(p+1)\gg_{\varepsilon}\pi(x).
		\end{align*}
		For $0<\alpha\leq 1$, there exists $C(\alpha)>0$ such that
		\begin{align*}
			\#\{n\leq x:P_y^+(n)<P_y^+(n+1)\geq (C(\alpha)+o(1)) x,
		\end{align*}
		where the function $C$ satisfies  $\lim_{\alpha\rightarrow 0^+}C(\alpha)=1/2$.
	\end{corollary}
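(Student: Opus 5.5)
The plan is to stitch together the small-$y$ asymptotics (Theorems 1.1 and 1.2, in their uniform $\varepsilon$–$\delta$ form stated after Theorem 1.2) with the positive-proportion results for $y=x^\alpha$: Theorem 1.3 for primes, its integer analogue, and Wang's bound $C(\alpha)>0$ for $0<\alpha\le 1$ from [13]. The one real difficulty is that the corollary asks for bounds uniform in $y$. Theorem 1.3 and Wang's result are stated only for fixed $\alpha$, so they have to be made uniform over compact ranges of $\alpha$.

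\emph{First and third assertions (integers).} Fix $\varepsilon\in(0,1/4)$ and let $\delta=\delta(\varepsilon)$ be as in the remark after Theorem 1.2. For $3\le y\le x^{\delta}$ that remark gives
\[
\#\{n\le x:P_y^+(n)<P_y^+(n+1)\}\ge (1/2-\varepsilon)x
\]
for all large $x$. For $x^{\delta}<y\le x$, write $y=x^{\alpha}$ with $\alpha\in(\delta,1]$.

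Suppose the claimed bound $\gg x$ failed. Then there would be sequences $x_k\to\infty$ and $y_k=x_k^{\alpha_k}$ along which the count is $o(x_k)$. By compactness we may pass to a subsequence with $\alpha_k\to\alpha_0\in[\delta,1]$. The main obstacle is now to check that Wang's lower bound (or the integer analogue of Theorem 1.3) still holds when $\alpha=\alpha_0+o(1)$ rather than exactly $\alpha_0$. I would verify this by inspecting the construction: the sieve weights and the level-of-distribution inputs depend on $\alpha$ only through $\log y/\log x$, and the resulting constant is an integral of Dickman/Buchstab-type functions that is continuous in $\alpha$. Hence the lower bound $(C(\alpha_0)+o(1))x$ persists along the subsequence, which gives a contradiction. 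The alternative is to note directly that $\inf_{[\delta,1]}C>0$ with uniform error terms.

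For the third assertion, define
\[
C(\alpha)=\min\Bigl(\tfrac12,\ \max\bigl(C_{\mathrm{W}}(\alpha),\ \sup\{\tfrac12-\varepsilon:\ \alpha<\delta(\varepsilon)\}\bigr)\Bigr),
\]
where $C_{\mathrm{W}}(\alpha)$ denotes Wang's constant from [13]. This is positive on $(0,1]$, is a valid lower-bound constant for each fixed $\alpha$, and tends to $1/2$ as $\alpha\to0^+$, because $\delta(\varepsilon)>0$ for every $\varepsilon$.

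\emph{Second assertion (primes).} The argument is identical. For $y\le x^{\delta}$ use Theorem 1.1 in its uniform form, which gives $\ge(1/2-\varepsilon)\pi(x)$. For $x^{\delta}<y\le x^{17/32-\varepsilon}$ use Theorem 1.3 with $\alpha\in[\delta,17/32-\varepsilon]$. This is a compact subinterval of $(0,17/32)$ on which $h>0$. The same subsequence/continuity argument, which again requires that the Bombieri–Vinogradov-type level underlying $17/32$ is applied with uniform error terms in $\alpha$, yields $\inf h>0$ there, and hence the bound $\gg_\varepsilon \pi(x)$.
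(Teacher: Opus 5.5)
Your route is the paper's route. The paper gives no separate proof of this corollary. It reads it off Theorems 1.4--1.6 (your 1.1--1.3), the $\varepsilon$--$\delta$ remark after Theorem 1.5, the observation at the start of Section 4 that $h_0(\alpha)\to 1/2$, and Wang's $C(\alpha)>0$ from [13]. Your explicit choice of $C(\alpha)$ for the third assertion is a correct way to make the limit $1/2$ precise.

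The paper never addresses uniformity over $x^{\delta}<y\le x$, so you are right to flag it. However, the step you leave open should not be settled by ``inspecting the construction''. That step is that the fixed-$\alpha_0$ bound survives along $\alpha_k\to\alpha_0$. For primes, inspecting the construction would need Lemma 2.5 uniformly in $\theta$, but its exceptional-set constant is only stated for fixed $\theta$. Also, $h$ is pieced together from four separately treated ranges of $\alpha$, with no continuity claimed.

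The step closes without knowing how $C$ or $h$ was obtained. If neither $n$ nor $n+1$ has a prime factor in $(y,y']$, then $P_y^+(n)=P_{y'}^+(n)$ and $P_y^+(n+1)=P_{y'}^+(n+1)$. Hence, for $y=x^{\alpha}\le y'=x^{\alpha'}$ with $\delta\le\alpha\le\alpha'\le 1$, Mertens' theorem shows the two integer counts differ by at most
\[
2\sum_{y<q\le y'}\Bigl(\frac{x}{q}+1\Bigr)\le\Bigl(2\log\frac{\alpha'}{\alpha}+o(1)\Bigr)x,
\]
uniformly in $\alpha,\alpha'$.

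For primes, bound the number of $p\le x$ with $q\mid p\mp 1$ for some $q\in(y,y']$ by Brun--Titchmarsh (Lemma 2.4), using $\log(x/q)\ge\frac{15}{32}\log x$ for $q\le x^{17/32}$. This gives a difference of at most $(K\log(\alpha'/\alpha)+o(1))\pi(x)$ with an absolute constant $K$.

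So along your subsequence, the count at $x_k^{\alpha_k}$ is at least the count at $x_k^{\alpha_0}$ minus $o(x_k)$ (resp.\ $o(\pi(x_k))$). By the fixed-$\alpha_0$ statement this is at least $(C(\alpha_0)+o(1))x_k$ (resp.\ $(h(\alpha_0)+o(1))\pi(x_k)$). The contradiction then uses only positivity of $C$ and $h$ at the single point $\alpha_0$. With this perturbation bound inserted, your argument is complete.
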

	The all above results are also true for the pattern $P_y^+(n)>P_y^+(n+1)$ or $P_y^+(p-1)>P_y^+(p+1)$.
	
	\section{Lemmas}
	
	The following lemma is an important tool for proving Theorem 1.6, and it is derived from Selberg's upper-bound sieve.
	\begin{lemma}
		Let $g$ be a natural number and let $a_i, b_i $ ($i=1,2,\cdots, g$) be integers satisfying
		\begin{align*}
			E:=\prod_{i=1}^g a_i\prod_{1\leq r<\leq g}(a_r b_s-a_sb_r)\neq 0.
		\end{align*}
		Let $\omega(p)$ denote the number of solutions in n modulo $p$ of 
		\begin{align*}
			\prod_{i=1}^g(a_in+b_i)\equiv 0\mkern-15mu\pmod{p},
		\end{align*}
		and suppose that
		\begin{align*}
			\omega(p)<p, \ \text{for all}\ p.
		\end{align*}
		If the real numbers $y$ and $z$ satisfy $1<y\leq z$, then 
		\begin{align*}
			&\#\{n:z-y<n\leq z, a_in+b_i \ \text{prime for}\ i=1,2,\cdots, g\}\\&\leq 2^gg!\prod_p\left(1-\frac{\omega(p)-1}{p-1}\right)\left(1-\frac{1}{p}\right)^{-g+1}\frac{y}{\log^g y}\Bigg(1+O\left(\frac{\log\log (3y)+\log\log |3E|}{\log y}\right)\Bigg),
		\end{align*}
		where the constant implied by the $O$-symbol depends at most on $g$.
	\end{lemma}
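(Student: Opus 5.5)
The plan is to derive the stated bound directly from Selberg's upper-bound sieve, in dimension $g$, applied to the polynomial values $F(n):=\prod_{i=1}^g(a_in+b_i)$ as $n$ runs over \emph{all} integers in $(z-y,z]$. No further restriction on $n$ is imposed.

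First I rewrite the constant. Since $\bigl(1-\frac{\omega(p)-1}{p-1}\bigr)=\frac{p-\omega(p)}{p-1}$, we have
\begin{align*}
\Bigl(1-\frac{\omega(p)-1}{p-1}\Bigr)\Bigl(1-\frac1p\Bigr)^{-g+1}=\Bigl(1-\frac{\omega(p)}{p}\Bigr)\Bigl(1-\frac1p\Bigr)^{-g}.
\end{align*}
So the claimed main term is $2^g g!\,\mathfrak S\, y/\log^g y$ with $\mathfrak S=\prod_p(1-\omega(p)/p)(1-1/p)^{-g}$, which is exactly the shape the $g$-dimensional Selberg sieve produces.

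\textbf{Reduction to a sifting function.} Let $\mathcal A=\{F(n): z-y<n\le z\}$ and fix a level $\xi\ge 2$. If every $a_in+b_i$ is prime, then either $(F(n),P(\xi))=1$, or some $a_in+b_i=\pm q$ with $q<\xi$ prime. Because $a_i\neq 0$, each such value of $a_in+b_i$ determines $n$. Hence the exceptional $n$ number at most $2g\xi$, and
\begin{align*}
\#\{n\}\le S(\mathcal A,\xi)+2g\xi .
\end{align*}
For squarefree $d$, the Chinese remainder theorem gives $|\mathcal A_d|=\frac{\omega(d)}{d}y+r_d$ with $|r_d|\le\omega(d)\le g^{\nu(d)}$. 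Here $\omega(p)\le g$, or $\omega(p)\le g-1$ when $p\mid E$, and we have $\omega(p)<p$ by hypothesis. Selberg's upper bound then gives
\begin{align*}
S(\mathcal A,\xi)\le \frac{y}{G(\xi)}+\sum_{d<\xi^2,\,d\mid P(\xi)}3^{\nu(d)}|r_d|,\qquad G(\xi)=\sum_{d<\xi,\,d\mid P(\xi)}\mu^2(d)\prod_{p\mid d}\frac{\omega(p)}{p-\omega(p)} .
\end{align*}
The remainder is $\ll \xi^2(\log\xi)^{3g-1}$. I will choose $\xi^2=y/\log^{4g}y$, so that both the remainder and $2g\xi$ are $O(y/\log^{g+1}y)$, which is admissible.

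\textbf{Lower bound for $G(\xi)$.} This is the main step and the main obstacle. I need
\begin{align*}
G(\xi)\ge \frac{(\log\xi)^g}{g!\,\mathfrak S}\Bigl(1+O\Bigl(\frac{\log\log 3y+\log\log|3E|}{\log \xi}\Bigr)\Bigr),
\end{align*}
with the implied constant depending only on $g$. I would follow Halberstam--Richert (their Lemma 4.1 together with the dimension-$g$ asymptotic of their Chapter 5). The idea is to write $G(\xi)$ through the multiplicative function $h(p)=\omega(p)/(p-\omega(p))$ and compare it with the model sum in which $\omega(p)=g$. The comparison needs the Mertens-type input
\begin{align*}
\sum_{w\le p<\xi}\frac{\omega(p)\log p}{p}=g\log\frac{\xi}{w}+O\Bigl(1+\sum_{p\mid E}\frac{\log p}{p}\Bigr),
\end{align*}
which holds because $\omega(p)=g$ for every $p\nmid E$. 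The delicate point is to keep the dependence on $E$ additive in the error term rather than multiplicative. The bound $\sum_{p\mid E}\log p/p\ll\log\log|3E|$ is what produces the $\log\log|3E|/\log\xi$ term. The possibly small primes with $\omega(p)$ close to $p$ are harmless: they appear identically in $G(\xi)$ and in $\mathfrak S$, so they are absorbed into the Euler product.

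\textbf{Conclusion.} With the above choice, $\log\xi=\frac12\log y\,(1+O(\log\log 3y/\log y))$, so $(\log\xi)^{-g}=2^g(\log y)^{-g}(1+O(\log\log3y/\log y))$. Combining the three displays gives
\begin{align*}
\#\{n\}\le 2^g g!\,\mathfrak S\,\frac{y}{\log^g y}\Bigl(1+O\Bigl(\frac{\log\log 3y+\log\log|3E|}{\log y}\Bigr)\Bigr),
\end{align*}
which by the identity above is the statement of the lemma. Finally, when $y$ is bounded the claim is trivial after adjusting the constant, since $\mathfrak S\gg_g(\log\log|3E|)^{-g}$.
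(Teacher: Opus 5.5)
Your proposal is correct and is essentially the Halberstam--Richert argument behind [4, Theorem 5.7], which the paper simply cites: the dimension-$g$ Selberg upper-bound sieve applied to $\prod_i(a_in+b_i)$ with $|r_d|\le\omega(d)$, a level $\xi$ equal to $y^{1/2}$ up to logarithmic factors, and their asymptotic for $G(\xi)$, in which $E$ enters only through $L\ll_g 1+\log\log|3E|$. One small correction: the bound $\mathfrak S\gg_g(\log\log|3E|)^{-g}$ you invoke at the end is too weak, when $g\ge 2$ and $|E|$ is huge, both to absorb your additive remainder $O(y/\log^{g+1}y)$ into the relative error and to settle the bounded-$y$ case; use instead $\mathfrak S\gg_g 1$, which holds because each local factor $(1-\omega(p)/p)(1-1/p)^{-g}$ decreases in $\omega(p)$, we have $\omega(p)\le\min(g,p-1)$, and the product with $\omega(p)=\min(g,p-1)$ converges to a positive constant depending only on $g$.
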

	\begin{proof} This is [4, Theorem 5.7].
	\end{proof}

	\begin{lemma}
		For any given positive constant $A>0$, for $q\leq (\log x)^A$ and $(a,q)=1$, we have
		\begin{align*}
			\pi(x;q,a)=\sum_{\substack{p\leq x\\p\equiv a\mkern-15mu\pmod{q}}}1=\frac{\pi(x)}{\varphi(q)}+O_A(x\exp(-c\sqrt{\log x})),
		\end{align*}
		where $c=c(A)>0$.
	\end{lemma}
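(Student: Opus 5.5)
This statement is the Siegel--Walfisz theorem, and I would derive it from the prime number theorem for Dirichlet $L$-functions. Write
\[
\psi(x;q,a)=\sum_{\substack{n\le x\\ n\equiv a \,(\mathrm{mod}\, q)}}\Lambda(n).
\]
By orthogonality of characters,
\[
\psi(x;q,a)=\frac{1}{\varphi(q)}\sum_{\chi\ (\mathrm{mod}\ q)}\bar\chi(a)\,\psi(x,\chi).
\]
The principal character gives $\psi(x)+O(\log^2(qx))$, and the prime number theorem with classical error term gives $\psi(x)=x+O(x\exp(-c\sqrt{\log x}))$. It therefore suffices to show that
\[
\psi(x,\chi)\ll_A x\exp(-c\sqrt{\log x})\qquad\text{for every nonprincipal }\chi \bmod q,\ q\le(\log x)^A .
\]
Once this holds for $\psi$, partial summation passes to $\theta(x;q,a)$ (prime powers contribute $O(\sqrt{x}\log x)$) and then to $\pi(x;q,a)$. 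The main term $x/\varphi(q)$ becomes $\mathrm{li}(x)/\varphi(q)$, and replacing $\mathrm{li}(x)$ by $\pi(x)$ costs only $O(x\exp(-c\sqrt{\log x}))$.

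For the nonprincipal characters I use the truncated explicit formula
\[
\psi(x,\chi)=-\sum_{|\gamma|\le T}\frac{x^\rho}{\rho}-\frac{x^{\beta_1}}{\beta_1}+O\!\left(\frac{x\log^2(qxT)}{T}\right).
\]
The term $x^{\beta_1}/\beta_1$ is present only if $\chi$ is the real character with an exceptional zero $\beta_1$. The classical zero-free region
\[
\sigma>1-\frac{c_1}{\log(q(|t|+2))}
\]
holds for all nonexceptional zeros. With $T=\exp(\sqrt{\log x})$ and $q\le(\log x)^A$, we have $\log(qT)\asymp\sqrt{\log x}$. Hence each such zero satisfies $x^{\beta}\le x\exp(-c'\sqrt{\log x})$, and the $O(T\log(qT))$ zeros contribute acceptably.

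The main obstacle is the possible exceptional zero $\beta_1$, which I would control with Siegel's theorem: for every $\varepsilon>0$ there is an ineffective constant $C(\varepsilon)$ with $\beta_1\le 1-C(\varepsilon)q^{-\varepsilon}$. Taking $\varepsilon=1/(2A)$ gives $q^{\varepsilon}\le(\log x)^{1/2}$. Then
\[
x^{\beta_1}\le x\exp\!\bigl(-C(\varepsilon)\log x/(\log x)^{1/2}\bigr)=x\exp\!\bigl(-C(\varepsilon)\sqrt{\log x}\bigr).
\]
This yields the stated bound with $c=c(A)$; the constant $c$, or equivalently the implied constant, is ineffective, which is consistent with the dependence on $A$ in the statement. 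In the paper itself I would simply cite the standard reference, for example Davenport's \emph{Multiplicative Number Theory}, Chapter 22, or [4], rather than reproduce these steps.
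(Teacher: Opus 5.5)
Your proposal is correct and takes the same route as the paper, which proves this lemma only by citing the Siegel--Walfisz theorem; your sketch (orthogonality, the truncated explicit formula with $T=\exp(\sqrt{\log x})$, the classical zero-free region, and Siegel's bound $\beta_1\le 1-C(\varepsilon)q^{-\varepsilon}$ with $\varepsilon=1/(2A)$) is the standard proof in Davenport, Chapter 22. One point to tighten is the justification for the nonexceptional zeros: they are acceptable because of the weights $1/\rho$, that is $\sum_{|\gamma|\le T}1/|\rho|\ll\log^2(qT)$, and not merely because there are $O(T\log(qT))$ of them, since a factor $T=\exp(\sqrt{\log x})$ would swamp the saving $\exp(-c'\sqrt{\log x})$.
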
	
	\begin{proof}
		This is the Siegel-Walfisz theorem.
	\end{proof}
	
	\begin{lemma}
		For any given positive constant $A>0$, there exists a constant $B=B(A)>0$ such that \begin{align*}
			\sum_{q\leq x^{1/2}/(\log x)^B}\max_{y\leq x}\max_{(a,q)=1}\Big|\sum_{\substack{p\leq y\\p\equiv a\mkern-15mu\pmod{q}}}1-\frac{\pi(y)}{\varphi(q)}\Big)\Big|\ll\frac{x}{(\log x)^A},
		\end{align*}
		where the constant implied by the symbol ``$\ll$" depends only on $A$.
	\end{lemma}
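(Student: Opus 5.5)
This is the Bombieri--Vinogradov theorem, so in the paper one simply cites it. A self-contained proof would go as follows.

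\emph{Reductions.} Work with $\psi(y;q,a)=\sum_{n\le y,\,n\equiv a\,(q)}\Lambda(n)$ and its deviation from $y/\varphi(q)$. Partial summation, together with the trivial bound for prime powers, transfers the result back to $\pi(y;q,a)$ at a cost of $O(\sqrt{x}\log x)$ per modulus, which is acceptable. Expanding the condition $n\equiv a\pmod q$ in Dirichlet characters gives
\begin{align*}
|E(y;q,a)|\le \frac{1}{\varphi(q)}\sum_{\chi\bmod q}|\psi'(y,\chi)|,
\end{align*}
where $\psi'$ omits the principal character's main term. Each character is then replaced by the primitive character $\chi^*$ inducing it, modulo $r\mid q$; the error from $\gcd(n,q)>1$ is $O(\log^2 (qx))$. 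Grouping the moduli $q$ according to $r$ leaves the task of bounding
\begin{align*}
\sum_{r\le Q}\frac{1}{\varphi(r)}\log x\sum_{\chi\bmod r}^{*}\max_{y\le x}|\psi(y,\chi)|.
\end{align*}

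\emph{Small moduli.} For $r\le (\log x)^{C}$, Lemma 2.2 (Siegel--Walfisz), in its character form, gives $\psi(y,\chi)\ll x\exp(-c\sqrt{\log x})$. This handles the range, since there are only polylogarithmically many such characters.

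\emph{Large moduli.} For the remaining range, split dyadically into $R<r\le 2R$. Decompose $\Lambda$ by Vaughan's identity with parameters $U=V$, producing type I sums and type II bilinear sums. The key input is the multiplicative large sieve inequality,
\begin{align*}
\sum_{r\le R}\frac{r}{\varphi(r)}\sum^*_{\chi}\Big|\sum_{n\le N}a_n\chi(n)\Big|^2\le (N+R^2)\sum|a_n|^2,
\end{align*}
combined with Cauchy--Schwarz on the bilinear forms. This yields
\begin{align*}
\sum_{r\le R}\frac{r}{\varphi(r)}\sum_\chi^*\max_{y\le x}|\psi(y,\chi)|\ll (x+x^{5/6}R+x^{1/2}R^2)(\log xR)^4.
\end{align*}
The $\max_{y}$ is removed by the standard Perron-type truncation, or by the trick of inserting $n\le y$ via an integral of $n^{it}$, which costs only a log factor. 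Dividing by $R$ and summing dyadically from $(\log x)^C$ to $Q=x^{1/2}(\log x)^{-B}$ gives
\begin{align*}
\ll \big(x(\log x)^{-C}+x^{5/6}\log Q+x^{1/2}Q\big)(\log x)^{5},
\end{align*}
which is $\ll x(\log x)^{-A}$ once $C$ and $B$ are chosen large in terms of $A$.

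The main obstacle is the bilinear (type II) estimate, together with uniform control of the maximum over $y$. These are handled by the large sieve and Gallagher's variant, exactly as in Davenport's \emph{Multiplicative Number Theory}, Chapter 28, or Iwaniec--Kowalski, Theorem 17.1, which is what the paper cites.
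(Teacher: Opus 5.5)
Your proposal is correct and takes the same route as the paper, which simply cites the Bombieri--Vinogradov theorem without proof (and without naming a specific reference). Your supporting sketch is the standard Vaughan-identity and large-sieve argument of Davenport, Chapter 28, and its reductions and final parameter choices (e.g.\ $B, C \ge A+5$) are sound.
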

	\begin{proof}
		This is the Bombieri-Vinogradov theorem. 
	\end{proof}
	
	\begin{lemma}
		If $1\leq d<x$, $0\leq h<d$, $(h,d)=1$, then
		\begin{align*}
			\pi(x;d,h)=\sum_{\substack{p\leq x\\p\equiv h\mkern-15mu\pmod{d}}}1\leq \frac{(2+o(1))x}{\varphi(d)\log(x/d)}.
		\end{align*}
	\end{lemma}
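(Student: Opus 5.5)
This is the Brun--Titchmarsh inequality in the Montgomery--Vaughan form. I would prove it directly with Selberg's upper-bound sieve; Lemma 2.1 is not sharp enough to give the constant $2$.

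\emph{Setting up the sieve.} Write each $p\le x$ with $p\equiv h \pmod d$ as $p=h+dm$ with $0\le m\le (x-h)/d$. The number of such $m$ is $N:=x/d+O(1)$.

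\emph{Trivial contribution.} Fix $z\ge 2$, to be chosen later. Those $p$ with $p\le z$ or $p\mid d$ number at most $z+\omega(d)$. This is $o\big(x/(\varphi(d)\log(x/d))\big)$ once $z=(x/d)^{1/2-\varepsilon}$ with $\varepsilon\to0$, provided $x/d\to\infty$. If $x/d$ is bounded, then $\log(x/d)$ could even be tiny. In that case $\pi(x;d,h)\le x/d+1$, and I would handle it separately with the trivial bound $\pi(x;d,h)\le 1+x/d\le 2x/(\varphi(d)\log(x/d))$ for $x/d$ bounded. One has to check this range carefully, and it is essentially what makes the $(2+o(1))$ formulation acceptable.

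\emph{Sifting.} Every remaining prime makes $h+dm$ free of prime factors $\ell<z$ with $\ell\nmid d$. The local density is $1/\ell$ for $\ell\nmid d$ and $0$ for $\ell\mid d$, since $(h,d)=1$.

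Selberg's sieve with weights $\lambda_e$ supported on squarefree $e<z$, $(e,d)=1$, gives
\begin{align*}
S\le \frac{N}{G(z)}+\sum_{e_1,e_2<z}|\lambda_{e_1}\lambda_{e_2}|\,O(1),
\qquad G(z)=\sum_{\substack{e<z\\ (e,d)=1}}\frac{\mu^2(e)}{\varphi(e)}.
\end{align*}
Using $|\lambda_e|\le1$, the remainder is $O(z^2)$.

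\emph{Lower bound for $G(z)$.} Grouping $e$ by $\gcd$ with $d$ via the standard identity gives
\begin{align*}
G(z)\ge \frac{\varphi(d)}{d}\sum_{n<z}\frac1n\ge \frac{\varphi(d)}{d}\log z.
\end{align*}
Hence
\begin{align*}
S\le \frac{x}{\varphi(d)\log z}+O(z^2)+O\Big(\frac{d}{\varphi(d)}\Big).
\end{align*}

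\emph{Choice of $z$.} Take $z=(x/d)^{1/2}/\log(x/d)$. Then $\log z=(\tfrac12+o(1))\log(x/d)$, so the main term is $(2+o(1))x/(\varphi(d)\log(x/d))$.

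The error term is $O(z^2)=O\big(x/(d\log^2(x/d))\big)$. This is smaller than the main term, since $1/d\le 1/\varphi(d)$. The term $O(d/\varphi(d))\ll\log\log d$ is negligible because $x/\varphi(d)\ge x/d\to\infty$. The condition $z\le x/d$ holds.

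\emph{Main obstacle.} The key technical step is the lower bound $G(z)\ge(\varphi(d)/d)\log z$ restricted to $(e,d)=1$. I would prove it by writing
\begin{align*}
\sum_{k\mid d^\infty}\frac{\mu^2(e)}{\varphi(e)}\cdot(\cdots)
\end{align*}
to show $\frac{d}{\varphi(d)}G(z)\ge\sum_{n<z}1/n$. The other delicate point is uniformity in the range where $x/d$ is small.
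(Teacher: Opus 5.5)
\emph{Comparison with the paper.} The paper gives no argument for this lemma; it simply quotes the Brun--Titchmarsh inequality. In the Montgomery--Vaughan form this reads $\pi(x;d,h)\le 2x/(\varphi(d)\log(x/d))$, valid for all $1\le d<x$. You instead reproduce the classical Selberg-sieve proof, and its core is correct:
\begin{itemize}
\item Sifting $\{h+dm\}$ by primes $\ell<z$, $\ell\nmid d$, with weights supported on $e<z$ and $|\lambda_e|\le1$, gives $S\le X/G(z)+z^2$.
\item The bound $G(z)\ge(\varphi(d)/d)\log z$ follows from the identity $\mu^2(e)/\varphi(e)=\sum_{\mathrm{rad}(m)=e}1/m$ together with $d/\varphi(d)=\sum_{k\mid d^\infty}1/k$. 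Each $n<z$ factors uniquely as $n=km$ with $k\mid d^\infty$, $(m,d)=1$ and $\mathrm{rad}(m)\le m<z$; this is the content of your garbled ``$\sum_{k\mid d^\infty}\cdots$'' display.
\item The choice $z=(x/d)^{1/2}/\log(x/d)$ then yields $(2+o(1))x/(\varphi(d)\log(x/d))$ as $x/d\to\infty$.
\end{itemize}
The citation buys the sharp, fully uniform statement at no cost. Your route is self-contained and shows where the constant $2$ comes from: the level $z^2\approx x/d$ forces $\log z\approx\frac12\log(x/d)$. The price is an $o(1)$ that is only meaningful as $x/d\to\infty$. That is exactly the regime the paper needs, since every modulus it feeds into this lemma satisfies $d\le x^{1-\delta}$.

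\emph{Points to fix.}

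First, your treatment of bounded $x/d$ is wrong as stated. The inequality $1+x/d\le 2x/(\varphi(d)\log(x/d))$ already fails for $x/d=6$ with $d$ a large prime, since $7>12/\log 6\approx 6.70$. Either read the $o(1)$ as $x/d\to\infty$ and delete that paragraph, or, if you want uniformity over all $d<x$, say that this needs a genuinely stronger input such as Montgomery--Vaughan.

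Second, do not argue that the $O(d/\varphi(d))$ term is negligible via $d/\varphi(d)\ll\log\log d$ and $x/d\to\infty$. Taken literally, that comparison fails when $d$ is close to $x$ and $x/d\to\infty$ slowly. The clean reason is that $X/G(z)\le \frac{x}{\varphi(d)\log z}\bigl(1+O(d/x)\bigr)$, so this term is a factor $O(d/x)$ times the main term.

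Third, your remark that Lemma 2.1 cannot give the constant $2$ is inaccurate. Take $g=1$, $a_1=d$, $b_1=h$. Then $E=d$, $\omega(p)=1$ for $p\nmid d$ and $\omega(p)=0$ for $p\mid d$, so the singular product equals $d/\varphi(d)$. Lemma 2.1 therefore gives $\frac{2x}{\varphi(d)\log(x/d)}\bigl(1+O\bigl(\frac{\log\log(3x/d)+\log\log 3d}{\log(x/d)}\bigr)\bigr)$. This is precisely the required bound whenever $\log\log d=o(\log(x/d))$, in particular for all moduli occurring in the paper. What it fails to give is uniformity for $d$ very close to $x$.
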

	\begin{proof}
		This is the Brun-Titchmarsh inequity.
	\end{proof}
	
	\begin{lemma}
		There exist two functions $C_2(\theta)>C_1(\theta)>0$, defined on the intervals $(0,17/32)$, such that for each fixed integer $a\in\mathbb{Z}^*$, for each fixed real $A>0$, and all sufficiently large $Q=x^{\theta}$, the inequalities
		\begin{align*}
			C_1(\theta)\frac{\pi(x)}{\varphi(q)}\leq \pi(x;q,a)\leq C_2(\theta)\frac{\pi(x)}{\varphi(q)}
		\end{align*}
		hold for all primes $q\in (Q,2Q]$ with at most $O(Q(\log Q)^{-A})$ exceptions, where the implied constant depends only on $a$, $A$ and $\theta$. Moreover, for any fixed $\varepsilon>0$, these functions can be chosen to satisfy the following properties:
		\begin{itemize}
			\item $C_1(\theta)$ is monotonic decreasing, and $C_2(\theta)$ is monotonic increasing.
			\item $C_1(1/2)=1-\varepsilon$  and $C_2(1/2)=1+\varepsilon$.
		\end{itemize}
		\begin{proof}
			This is [1, Lemma 2.1].
		\end{proof}
		
	\end{lemma}
	
	\subsection*{Convention.} We use $\varepsilon$ to denote a sufficiently small positive number, and the value of $\varepsilon$ may change from statement to statement. We use $\mu(n)$ and $\varphi(n)$ to denote the M$\mathrm{\ddot{o}}$bius function and Euler's function, respectively. We use $\nu(n)$ to denote the number of distinct prime factors of $n$. By $(m_1,m_2)$ we denote the greatest common divisor of $m_1$ and $m_2$. We use the standard asymptotic notation $f\ll g,f\gg g,f=O(g),f=o_{x\rightarrow\infty}(g)$ from analytic number theory, and indicate that the implicit constants depend on some parameter $\varepsilon$ through subscripts. By $m\sim M$ we denote $M< m\leq2M$. Statements like $f(x)\ll x^{o(1)}g(x)$ should be read as $\forall \varepsilon>0$, $f(x)\ll_\varepsilon x^\varepsilon g(x)$. Define
	\begin{align*}
		P(x):=\prod_{p\leq x}p,\quad P(y,z):=\prod_{y<p\leq z}p.
	\end{align*}

	\section{Proof of Theorem 1.4 and Theorem 1.5}
	In this section, we mainly focus on the proof of Theorem 1.4.  And Theorem 1.5 is an easy corollary. We consider $P^+_y(p-1)<P^+_y(p+1)$.
	\subsection{Case 1: $3\leq y\leq  \log\log x$} In this case, we have
	\begin{align*}
		P(y)=\prod_{p\leq y}p=\exp\left(\sum_{p\leq y}\log p\right)\leq (\log x)^2.
	\end{align*}
	Define $2qp':=(p+1,P(y))$ with $P^+(q)<p'$, $p'\geq 3$. (\(p'\ge 3\) must exist; otherwise, \((p-1,P(y))=(p+1,P(y))=2\), which contradicts \( p^2\equiv 1\mkern-7mu\pmod{3}\).) We write $P^+_y(p-1)<P^+_y(p+1)$ as $(p-1,P(p',y))=1$.
	By the Siegel-Walfisz theorem, we have
	\begin{align*}
		\sum_{\substack{p\leq x\\(p+1,P(y))=2qp'\\(p-1,P(p',y))=1}}1&=\sum_{2qp'\sigma_1|P(y)}\mu(\sigma_1)\sum_{\substack{\sigma_2|P(p'   ,y)\\(\sigma_1,\sigma_2)=1}}\mu(\sigma_2)\sum_{\substack{p\leq x\\2qp'\sigma_1|(p+1)\\\sigma_2|(p-1)}}1\\&=\sum_{2qp'\sigma_1|P(y)}\mu(\sigma_1)\sum_{\substack{\sigma_2|P(p',y)\\(\sigma_1,\sigma_2)=1}}\mu(\sigma_2)\frac{\pi(x)}{\varphi(2qp'\sigma_1\sigma_2)}+O(x\exp(-c\sqrt{\log x})).\tag{3.1}
	\end{align*}
	For the summation, we have
	\begin{align*}
		\sum_{2qp'\sigma_1|P(y)}\mu(\sigma_1)\sum_{\substack{\sigma_2|P(p',y)\\(\sigma_1,\sigma_2)=1}}\mu(\sigma_2)&\frac{\pi(x)}{\varphi(2qp'\sigma_1\sigma_2)}=\frac{\pi(x)}{\varphi(qp')}\sum_{2qp'\sigma_1|P(y)}\frac{\mu(\sigma_1)}{\varphi(\sigma_1)}\prod_{\substack{p'<p\leq y\\(p,\sigma_1)=1}}\left(1-\frac{1}{p-1}\right)\\&=\frac{\pi(x)}{\varphi(qp')}\prod_{\substack{p'<p\leq y}}\left(\frac{p-2}{p-1}\right)\sum_{2qp'\sigma_1|P(y)}\frac{\mu(\sigma_1)}{\varphi(\sigma_1)}\prod_{\substack{p'<p\leq y\\p|\sigma_1}}\left(\frac{p-1}{p-2}\right)\\&=\frac{\pi(x)}{\varphi(qp')}\prod_{\substack{p'<p\leq y}}\left(\frac{p-2}{p-1}\right)\prod_{\substack{p'<p\leq y}}\left(\frac{p-3}{p-2}\right)\prod_{\substack{2<p<p'\\(p,q)=1}}\left(\frac{p-2}{p-1}\right)\\&=\frac{\pi(x)}{\varphi(qp')}\prod_{\substack{p'<p\leq y}}\left(\frac{p-3}{p-1}\right)\prod_{\substack{2<p<p'}}\left(\frac{p-2}{p-1}\right)\prod_{\substack{2<p<p'\\p|q}}\left(\frac{p-1}{p-2}\right).\tag{3.2}
	\end{align*}
	Summing over all $q$ and $p'$, we have
	\begin{align*}
		&\sum_{\substack{q,p'\\2qp'|P(y)\\P^+(q)<p'}}\frac{\pi(x)}{\varphi(qp')}\prod_{\substack{p'<p\leq y}}\left(\frac{p-3}{p-1}\right)\prod_{\substack{2<p<p'}}\left(\frac{p-2}{p-1}\right)\prod_{\substack{2<p<p'\\p|q}}\left(\frac{p-1}{p-2}\right)\\&=\pi(x)\sum_{2<p'\leq y}\frac{1}{p'-1}\prod_{\substack{p'<p\leq y}}\left(\frac{p-3}{p-1}\right)\prod_{\substack{2<p<p'}}\left(\frac{p-2}{p-1}\right)\sum_{2q|P(p'-1)}\frac{1}{\varphi(q)}\prod_{\substack{2<p<p'\\p|q}}\left(\frac{p-1}{p-2}\right)\\&=\pi(x)\sum_{2<p'\leq y}\frac{1}{p'-1}\prod_{\substack{p'<p\leq y}}\left(\frac{p-3}{p-1}\right)\prod_{\substack{2<p<p'}}\left(\frac{p-2}{p-1}\right)\prod_{2<p<p'}\left(\frac{p-1}{p-2}\right)\\&=\pi(x)\sum_{2<p'\leq y}\frac{1}{p'-1}\prod_{\substack{p'<p\leq y}}\left(\frac{p-3}{p-1}\right).\tag{3.3}
	\end{align*}
	Let \(\nu(n)\) denote the number of distinct prime factors of $n$. Noting that
	\begin{align*}
		\sum_{2<p'\leq y}\frac{2}{p'-1}\prod_{\substack{p'<p\leq y}}\left(\frac{p-3}{p-1}\right)&=\sum_{2<p'\leq y}\frac{2}{p'-1}\prod_{\substack{p'<p\leq y}}\left(1+\frac{-2}{p-1}\right)\\&=\sum_{2<p'\leq y}\frac{2}{p'-1}\sum_{r|P(p',y)}\frac{(-2)^{\nu(r)}}{\varphi(r)}\\&=-\sum_{2s|P(y),s>1}\frac{(-2)^{\nu(s)}}{\varphi(s)}\\&=-\prod_{2<p\leq y}\left(\frac{p-3}{p-1}\right)+1\\&=1,
	\end{align*}
	we get the required main term.
	For the error term, since both $q$ and \(p'\) are at most \(O(\log x)\) in number, this term is negligible. We complete the proof of this case.
	
	\subsection{Case 2: $\log\log x<y\ll x^{o(1)}$}
	We start from the following expression
	\begin{align*}
		\sum_{\substack{p\leq x\\ P^+_y(p-1)<P^+_y(p+1)}}1\geq \sum_{\substack{p\leq x\\ P^+_y(p+1)>y^{c}}}1-\sum_{\substack{p\leq x\\ y^{c}<P^+_y(p+1)<P^+_y(p-1)}}1=\mathscr{S}_A-\mathscr{S}_B,\tag{3.4}
	\end{align*}
	where $\log\log x<y\ll x^{o(1)}$, $0<c<1/100$. For $\mathscr{S}_A$, we have
	\begin{align*}
		\mathscr{S}_A&=\sum_{\substack{p\leq x\\(p+1,P(y^c,y))>1}}1\\&=\sum_{y^c<p'_1\leq y }\sum_{\substack{p\leq x\\p'_1|(p+1)}}1-\frac{1}{2!}\sum_{y^c<p'_1,p_2'\leq y }\sum_{\substack{p\leq x\\p'_1p_2'|(p+1)}}1+\frac{1}{3!}\sum_{y^c<p'_1,p_2',p_3'\leq y }\sum_{\substack{p\leq x\\p'_1p_2'|(p+1)}}1-\cdots+o(\pi(x))\\&\geq \sum_{1\leq j\leq J}\frac{(-1)^{j+1}}{j!}\sum_{y^c<p_1',\cdots,p_j'\leq y}\sum_{\substack{p\leq x\\p_1'\cdots p_j'|(p+1)}}1+o(\pi(x))\\&=\sum_{1\leq j\leq J}\frac{(-1)^{j+1}}{j!}\sum_{y^c<p_1',\cdots,p_j'\leq y}\frac{\pi(x)}{\varphi(p_1'\cdots p_j')}\\&\quad +\sum_{1\leq j\leq J}\frac{(-1)^{j+1}}{j!}\sum_{y^c<p_1',\cdots,p_j'\leq y}\Bigg(\sum_{\substack{p\leq x\\p_1'\cdots p_j'|(p+1)}}1-\frac{\pi(x)}{\varphi(p_1'\cdots p_j')}\Bigg)+o(\pi(x)),\tag{3.5}
	\end{align*}
	where $J=J(c,\varepsilon)$ is an even parameter to be determined. By the Bombieri-Vinogradov theorem, the second summation can be bounded by
	\begin{align*}
		\ll \sum_{q\leq y^J}\Bigg|\sum_{\substack{p\leq x\\p\equiv -1\mkern-15mu\pmod{q}}}1-\frac{\pi(x)}{\varphi(q)}\Bigg|\ll\frac{x}{(\log x)^A}.
	\end{align*}
	Now we begin to estimate the main term. If \(p_i = p_j\) occurs for $i\neq j$, the contribution of these terms is \(O(\pi(x)y^{-c})\). Thus we can write
	\begin{align*}
		\sum_{1\leq j\leq J}\frac{(-1)^{j+1}}{j!}\sum_{y^c<p_1',\cdots,p_j'\leq y}\frac{\pi(x)}{\varphi(p_1'\cdots p_j')}&=\pi(x)\sum_{1\leq j\leq J}\frac{(-1)^{j+1}}{j!}\left(\sum_{y^c<p_1'\leq y}\frac{1}{\varphi(p_1')}\right)^j+O\left(\frac{\pi(x)}{y^c}\right)\\&=\Bigg(\sum_{1\leq j\leq J}\frac{(-1)^{j+1}}{j!}\left(\log\frac{1}{c}\right)^j+o(1)\Bigg)\pi(x),\tag{3.6}
	\end{align*}
	since the prime number theorem implies
	\begin{align*}
		\sum_{p\leq x}\frac{1}{\varphi(p)}=\log\log x+C+O\left(\frac{1}{\log x}\right).
	\end{align*}
	Note that
	\begin{align*}
		e^{-x}=1-x+\frac{x^2}{2!}-\frac{x^3}{3!}+\cdots.
	\end{align*}
	Thus for sufficiently large $J$, we have
	\begin{align*}
		\sum_{1\leq j\leq J}\frac{(-1)^{j+1}}{j!}\left(\log\frac{1}{c}\right)^j\geq 1-\exp\left(-\log\frac{1}{c}\right)-c\varepsilon=1-c-c\varepsilon.\tag{3.7}
	\end{align*}
	Conclude from (3.5)--(3.7) that
	\begin{align*}
		\mathscr{S}_A\geq (1-c-c\varepsilon+o(1))\pi(x).\tag{3.8}
	\end{align*}
	
	For $\mathscr{S}_B$, we have
	\begin{align*}
		\mathscr{S}_B=&\sum_{\substack{p\leq x\\(p+1,P(y^c,P^+_y(p-1)))>1\\(p+1,P(P^+_y(p-1),y))=1}}1\\=&\sum_{y^c<p_1\leq y}\sum_{\substack{p\leq x\\p_1|(p-1)\\(p+1,P(y^c,p_1))>1\\(p+1,P(p_1,y))=1}}1-\sum_{y^c<p_1<p_2\leq y}\sum_{\substack{p\leq x\\p_1p_2|(p-1)\\(p+1,P(y^c,p_1))>1\\(p+1,P(p_1,y))=1}}1+\sum_{y^c<p_1<p_2<p_3\leq y}\sum_{\substack{p\leq x\\p_1p_2p_3|(p-1)\\(p+1,P(y^c,p_1))>1\\(p+1,P(p_1,y))=1}}1\\&-\sum_{y^c<p_1<p_2<p_3<p_4\leq y}\sum_{\substack{p\leq x\\p_1p_2p_3p_4|(p-1)\\(p+1,P(y^c,p_1))>1\\(p+1,P(p_1,y))=1}}1+\cdots+o(\pi(x))\\\leq &\sum_{1\leq j\leq J'}(-1)^{j+1}\sum_{y^c<p_1<\cdots<p_j\leq y}\sum_{\substack{p\leq x\\p_1\cdots p_j|(p-1)\\(p+1,P(y^c,p_1))>1\\(p+1,P(p_1,y))=1}}1+o(\pi(x)),\tag{3.9}
	\end{align*}
	where $J'=J'(c,\varepsilon)$ is an odd parameter to be determined. If $p_1\in(y^{1-c\varepsilon},y)$, the contribution of these terms is
	\begin{align*}
		\leq (2c\varepsilon+o(1))\pi(x).\tag{3.10}
	\end{align*}
	Note that for any $y^c<p_1\leq y^{1-c\varepsilon}$, we have
	\begin{align*}
		\sum_{\substack{p\leq x\\p_1\cdots p_j|(p-1)\\(p+1,P(y^c,p_1))>1\\(p+1,P(p_1,y))=1}}1&=\sum_{\substack{p\leq x\\p_1\cdots p_j|(p-1)\\(p+1,P(p_1,y))=1}}1-\sum_{\substack{p\leq x\\p_1\cdots p_j|(p-1)\\(p+1,P(y^c,y))=1}}1\\&=\sum_{\substack{p\leq x\\p_1\cdots p_j|(p-1)\\(p+1,P(y^c,y))>1}}1-\sum_{\substack{p\leq x\\p_1\cdots p_j|(p-1)\\(p+1,P(p_1,y))>1}}1,
	\end{align*}
	since
	\begin{align*}
		\sum_{\substack{p\leq x\\p_1\cdots p_j|(p-1)\\(p+1,P(p_1,y))=1}}1=\sum_{\substack{p\leq x\\p_1\cdots p_j|(p-1)}}1-\sum_{\substack{p\leq x\\p_1\cdots p_j|(p-1)\\(p+1,P(p_1,y))>1}}1.
	\end{align*}
	Similar to the estimate of $\mathscr{S}_A$ (take $J$ to be odd, and the inequality sign reverses from \(\geq\) to \(\leq\)), we have
	\begin{align*}
		&\sum_{\substack{y^c<p_1\leq y^{1-c\varepsilon}\\y^c<p_1<\cdots<p_j\leq y}}\sum_{\substack{p\leq x\\p_1\cdots p_j|(p-1)\\(p+1,P(y^c,y))>1}}1\leq \sum_{\substack{y^c<p_1\leq y^{1-c\varepsilon}\\y^c<p_1<\cdots<p_j\leq y}}(1-c+c^2\varepsilon+o(1))\frac{\pi(x)}{\varphi(p_1\cdots p_j)},\\&\sum_{\substack{y^c<p_1\leq y^{1-c\varepsilon}\\y^c<p_1<\cdots<p_j\leq y}}\sum_{\substack{p\leq x\\p_1\cdots p_j|(p-1)\\(p+1,P(p_1,y))>1}}1\geq \sum_{\substack{y^c<p_1\leq y^{1-c\varepsilon}\\y^c<p_1<\cdots<p_j\leq y}}\left(1-\frac{\log p_1}{\log y}-c^2\varepsilon+o(1)\right)\frac{\pi(x)}{\varphi(p_1\cdots p_j)},
	\end{align*}
	and
	\begin{align*}
		&\sum_{\substack{y^c<p_1\leq y^{1-c\varepsilon}\\y^c<p_1<\cdots<p_j\leq y}}\sum_{\substack{p\leq x\\p_1\cdots p_j|(p-1)\\(p+1,P(y^c,p_1))>1\\(p+1,P(p_1,y))=1}}1\leq\sum_{\substack{y^c<p_1\leq y^{1-c\varepsilon}\\y^c<p_1<\cdots<p_j\leq y}} \left(\frac{\log p_1}{\log y}-c+2c^2\varepsilon+o(1)\right)\frac{\pi(x)}{\varphi(p_1\cdots p_j)},\\&\sum_{\substack{y^c<p_1\leq y^{1-c\varepsilon}\\y^c<p_1<\cdots<p_j\leq y}}\sum_{\substack{p\leq x\\p_1\cdots p_j|(p-1)\\(p+1,P(y^c,p_1))>1\\(p+1,P(p_1,y))=1}}1\geq\sum_{\substack{y^c<p_1\leq y^{1-c\varepsilon}\\y^c<p_1<\cdots<p_j\leq y}} \left(\frac{\log p_1}{\log y}-c-2c^2\varepsilon+o(1)\right)\frac{\pi(x)}{\varphi(p_1\cdots p_j)}.
	\end{align*}
	Now we can estimate $\mathscr{S}_B$ by
	\begin{align*}
		\mathscr{S}_B&\leq \sum_{1\leq j\leq J'}(-1)^{j+1}\sum_{\substack{y^c<p_1\leq y^{1-c\varepsilon}\\y^c<p_1<\cdots<p_j\leq y}}\left(\frac{\log p_1}{\log y}-c+(-1)^{j+1}2c^2\varepsilon+o(1)\right)\frac{\pi(x)}{\varphi(p_1\cdots p_j)}\\&\leq \sum_{1\leq j\leq J'}(-1)^{j+1}\sum_{\substack{y^c<p_1\leq y^{1-c\varepsilon}\\y^c<p_1<\cdots<p_j\leq y}}\left(\frac{\log p_1}{\log y}-c\right)\frac{\pi(x)}{\varphi(p_1\cdots p_j)}+(2c\varepsilon+o(1))\pi(x)\\&=\pi(x)\sum_{y^c<p_1\leq y^{1-c\varepsilon}}\frac{1}{p_1}\left(\frac{\log p_1}{\log y}-c\right)\sum_{1\leq j\leq J'}\frac{(-1)^{j+1}}{(j-1)!}\left(\log \frac{\log y}{\log p_1}\right)^{j-1}+(2c\varepsilon+o(1))\pi(x).\tag{3.11}
	\end{align*}
	Then for sufficiently large $J'$, by partial summation, we have
	\begin{align*}
		&\pi(x)\sum_{y^c<p_1\leq y^{1-c\varepsilon}}\frac{1}{p_1}\left(\frac{\log p_1}{\log y}-c\right)\sum_{j\leq J'}\frac{(-1)^{j+1}}{(j-1)!}\left(\log \frac{\log y}{\log p_1}\right)^{j-1}\\&\leq \pi(x)\sum_{y^c<p_1\leq y^{1-c\varepsilon}}\frac{1}{p_1}\left(\frac{\log p_1}{\log y}-c\right)\frac{\log p_1}{\log y}+(c\varepsilon+o(1))\pi(x)\\&=\pi(x)\int_{y^c}^{y^{1-c\varepsilon}}\frac{1}{t\log t}\left(\frac{\log t}{\log y}-c\right)\frac{\log t}{\log y}\mathrm{d}t+(c\varepsilon+o(1))\pi(x)\\&\leq\Bigg(\frac{1}{2}-\frac{c^2}{2}-c(1-c)+2 c\varepsilon+o(1)\Bigg)\pi(x).\tag{3.12}
	\end{align*}
	
	Conclude from (3.4), (3.8), (3.9)--(3.12) that
	\begin{align*}
		\mathscr{S}_B\leq \Bigg(\frac{1}{2}-\frac{c^2}{2}-c(1-c)+4 c\varepsilon+o(1)\Bigg)\pi(x)
	\end{align*}
	and 
	\begin{align*}
		\sum_{\substack{p\leq x\\ P^+_y(p-1)<P^+_y(p+1)}}1\geq (g(c)+o(1))x,
	\end{align*}
	where $\log\log x<y\ll x^{o(1)}$, $0<c<1/100$,
	\begin{align*}
		g(c)=\frac{1}{2}-\frac{c^2}2-5c\varepsilon.
	\end{align*}
	In the same time, we have $\lim_{c\rightarrow 0^+}g(c)=\frac{1}{2}$
	and
	\begin{align*}
		\sum_{\substack{p\leq x\\ P^+_y(p-1)<P^+_y(p+1)}}1\geq \left(\frac{1}{2}-\varepsilon+o(1)\right)x.\tag{3.13}
	\end{align*}
	Similarly, we also have
	\begin{align*}
		\sum_{\substack{p\leq x\\ P^+_y(p-1)>P^+_y(p+1)}}1\geq \left(\frac{1}{2}-\varepsilon+o(1)\right)x,
	\end{align*}
	i.e.,
	\begin{align*}
		\sum_{\substack{p\leq x\\ P^+_y(p-1)<P^+_y(p+1)}}1\leq \left(\frac{1}{2}+\varepsilon+o(1)\right)x.\tag{3.14}
	\end{align*}
	By (3.13) and (3.14), as \(\varepsilon\to 0\), we complete the proof for the second case.
	
	Combining the two cases, for $3\leq y=y(x)\ll x^{o(1)}$, we have
	\begin{align*}
		\lim_{x\rightarrow\infty}\frac{1}{\pi(x)}\sum_{\substack{p\leq x\\ P^+_y(p-1)<P^+_y(p+1)}}1=\frac{1}{2}.
	\end{align*}
	We complete the proof of Theorem 1.4. The proof of Theorem 1.5 is similar (and simpler, since the Siegel-Walfisz theorem and the Bombieri-Vinogradov theorem are not required).

	\section{Proof of Theorem 1.6}
	In the proof of the second case of Theorem 1.4, we only require $y$ to be small with no additional constraints. In this setting, we may iterate $J$ (or \(J'\)) times. In order to apply the Bombieri-Vinogradov theorem, it therefore suffices to take \(y < x^{\min(1/J,1/J')/3}\). Then for $0<\alpha<\min(1/J,1/J')/3$ and $y=x^\alpha$, there exists $h_0(\alpha)>1/2-\varepsilon$ such that
	\begin{align*}
		\sum_{\substack{p\leq x\\ P^+_y(p-1)<P^+_y(p+1)}}1\geq \left(h_0(\alpha)+o(1)\right)\pi(x).
	\end{align*} 
	In particular, the function $h_0$ satisfies $\lim_{\alpha\rightarrow 0^+}h_0(\alpha)=1/2$.
	
	In the following, let $y=x^{\alpha}$.
	\subsection{Case 1: $0<\alpha< 1/2-\delta$}
	We start from the following expression
	\begin{align*}
		\sum_{\substack{p\leq x\\ P^+_y(p-1)<P^+_y(p+1)}}1\geq \sum_{\substack{p\leq x\\ P^+_y(p+1)>x^{c}}}1-\sum_{\substack{p\leq x\\ x^{c}<P^+_y(p+1)<P^+_y(p-1)}}1=\mathscr{S}_A-\mathscr{S}_B,\tag{4.1}
	\end{align*}
	where $0<c<\alpha$ and $0<\alpha<1/2-\varepsilon$.
	For $\mathscr{S}_A$, we have
	\begin{align*}
		\mathscr{S}_A&\geq\sum_{\substack{x^c<p_1\leq x^\alpha}}\sum_{\substack{p\leq x\\p+1\equiv 0\mkern-15mu\pmod{p_1}}}1-\sum_{\substack{x^c<p_1< p_2\leq x^\alpha}}\sum_{\substack{p\leq x\\p+1\equiv 0\mkern-15mu\pmod{p_1p_2}}}1\\&=\sum_{\substack{x^c<p_1\leq x^\alpha}}\frac{\pi(x)}{\varphi(p_1)}+\sum_{\substack{x^c<p_1\leq x^\alpha}}\Bigg(\sum_{\substack{p\leq x\\p+1\equiv 0\mkern-15mu\pmod{p_1}}}1-\frac{\pi(x)}{\varphi(p_1)}\Bigg)\\&\quad-\sum_{\substack{x^c<p_1< p_2\leq x^\alpha}}\sum_{\substack{p\leq x\\p+1\equiv 0\mkern-15mu\pmod{p_1p_2}}}1.
	\end{align*}
	By the Bombieri-Vinogradov theorem, the second summation can be bounded by
	\begin{align*}
		\ll \frac{x}{(\log x)^A}.
	\end{align*}
	Then by the prime number theorem, $\mathscr{S}_A$ becomes
	\begin{align*}
		\mathscr{S}_A&\geq\sum_{\substack{x^c<p_1\leq x^\alpha}}\frac{\pi(x)}{\varphi(p_1)}-\sum_{\substack{x^c<p_1<p_2\leq x^\alpha}}\sum_{\substack{p\leq x\\p+1\equiv 0\mkern-15mu\pmod{p_1p_2}}}1+O\left(\frac{x}{(\log x)^A}\right)\\&=\left(\log\frac{\alpha}{c}+o(1)\right)\pi(x)-\mathscr{R}_1,\tag{4.2}
	\end{align*}
	where 
	\begin{align*}
		\mathscr{R}_1=\sum_{\substack{x^c<p_1< p_2\leq x^\alpha}}\sum_{\substack{p\leq x\\p+1\equiv 0\mkern-15mu\pmod{p_1p_2}}}1.
	\end{align*}
	Now we begin to estimate $\mathscr{R}_1$ and $\mathscr{S}_B$. For $\mathscr{S}_B$, we have
	\begin{align*}
		\mathscr{S}_B\leq \sum_{\substack{x^c<p_1< p_2\leq x^\alpha}}\sum_{\substack{p\leq x\\p+1\equiv 0\mkern-15mu\pmod{p_1}\\p-1\equiv 0\mkern-15mu\pmod{p_2}}}1
	\end{align*}
	By the Brun-Titchmarsh inequity, we have
	\begin{align*}
		\sum_{\substack{p\leq x\\p\equiv a\mkern-15mu\pmod{q}}}1\leq \frac{(2+o(1))x}{\varphi(q)\log(x/q)},
	\end{align*}
	then we have
	\begin{align*}
		\mathscr{R}_1+\mathscr{S}_B&\leq \left(\frac{4}{1-2\alpha}+o(1)\right)\pi(x)\sum_{\substack{x^c<p_1< p_2\leq x^\alpha}}\frac{1}{(p_1-1)(p_2-1)}\\&=\Bigg(\frac{2}{1-2\alpha}\left(\log\frac{\alpha}{c}\right)^2+o(1)\Bigg)\pi(x)\tag{4.3}
	\end{align*}
	Finally, for $0<c<\alpha<1/2-\varepsilon$, we have
	\begin{align*}
		\sum_{\substack{p\leq x\\ P^+_y(p-1)<P^+_y(p+1)}}1\geq (g_1(c,\alpha)+o(1))\pi(x),
	\end{align*}
	where
	\begin{align*}
		g_1(c,\alpha)=\log\frac{\alpha}{c}-\frac{2}{1-2\alpha}\left(\log\frac{\alpha}{c}\right)^2.
	\end{align*}
	For any $0<\alpha<1/2-\varepsilon$, there exists $h_1(\alpha)>0$ such that 
	\begin{align*}
		\sum_{\substack{p\leq x\\ P^+_y(p-1)<P^+_y(p+1)}}1\geq (h_1(\alpha)+o(1))\pi(x),\tag{4.5}
	\end{align*}
	where
	\begin{align*}
		h_1(\alpha)=\sup_{0<c<\alpha}g(c,\alpha)\geq \Bigg(\log\frac{\alpha}{c}-\frac{2}{1-2\alpha}\left(\log\frac{\alpha}{c}\right)^2\Bigg)\Bigg|_{\log\frac{\alpha}{c}=\frac{1-2\alpha}{4}}=\frac{1-2\alpha}{8}>0.
	\end{align*}
	
	\subsection{Case 2: $1/2-\delta<\alpha\leq 1/2$}
	Within this section, a minor changes to the expression in Case 1 are required. We start from the following expression
	\begin{align*}
		\sum_{\substack{p\leq x\\ P^+_y(p-1)<P^+_y(p+1)}}1&\geq \sum_{\substack{p\leq x\\ P^+_y(p+1)>x^{c}}}1-\sum_{\substack{p\leq x\\ x^{c}<P^+_y(p+1)<P^+_y(p-1)}}1\\&\geq \sum_{\substack{p\leq x\\ P^+_y(p+1)>x^{c}}}1-\sum_{\substack{p\leq x\\ x^{c}<P^+_y(p+1)<P^+_y(p-1)<x^{c_1}}}1-\sum_{\substack{P^+_y(p-1)\geq x^{c_1}}}1\\&=\mathscr{S}_A-\mathscr{S}_B-\mathscr{S}_C.\tag{4.6}
	\end{align*}
	where $0<c<c_1<\alpha$ and $1/4<\alpha\leq 1/2$.
	Similar to Case 1, we have
	\begin{align*}
		\mathscr{S}_A\geq&\left(\log\frac{\alpha}{c}+o(1)\right)\pi(x)-\mathscr{R}_2,\\\mathscr{S}_B\leq& \sum_{\substack{x^c<p_1< p_2\leq x^{c_1}}}\sum_{\substack{p\leq x\\p+1\equiv 0\mkern-15mu\pmod{p_1}\\p-1\equiv 0\mkern-15mu\pmod{p_2}}}1\leq \left(\frac{1}{1-2c_1}\left(\log\frac{c_1}{c}\right)^2+o(1)\right)\pi(x),\\\mathscr{S}_C\leq&\sum_{x^{c_1}\leq p_1\leq x^{\alpha}}\sum_{\substack{p\leq x\\p-1\equiv 0\mkern-15mu\pmod{p_1}}}1=\left(\log\frac{\alpha}{c_1}+o(1)\right)\pi(x),
	\end{align*}
	where
	\begin{align*}
		\mathscr{R}_2=\sum_{x^c<p_1, p_2\leq x^{\alpha}}\sum_{\substack{p\leq x\\p+1\equiv 0\mkern-15mu\pmod{p_1p_2}}}1.
	\end{align*}
	We only need to estimate $\mathscr{R}_2$.
	Without loss of generality, consider $x(\log x)^{-B}<p\leq x$. We write $p+1=lp_1p_2$, where $x^c<p_1\leq x^{\alpha}$, $x^{1-\alpha}(\log x)^{-B}<lp_1\leq x^{1-c}$ and $2|l$. 
	Define
	\begin{align*}
		\mathscr{A}_1(lp_1):=\#\{1\leq m\leq (x+1)/(lp_1):(-1+lp_1m)\ \text{and}\ m\ \text{are primes} \}.
	\end{align*}
	By Lemma 2.1, we have
	\begin{align*}
		\mathscr{A}_1(lp_1)\leq (8+o(1))\prod_p\left(1-\frac{\omega(p)-1}{p-1}\right)\left(1-\frac{1}{p}\right)^{-1}\frac{x}{lp_1\log^2(x/(lp_1))},
	\end{align*}
	where $\omega(p)$ denote the number of solutions in $m$ modulo $p$ of 
	\begin{align*}
		(-1+lp_1m)m\equiv 0\mkern-15mu\pmod{p}.
	\end{align*}
	We have $\omega(p)=1$ if $p|lp_1$, and $\omega(p)=2$ if $(lp_1,p)=1$. Then
	\begin{align*}
		\prod_p\left(1-\frac{\omega(p)-1}{p-1}\right)\left(1-\frac{1}{p}\right)^{-1}=2A_0\prod_{p>2,p|p_1l}\left(\frac{p-1}{p-2}\right).
	\end{align*}
	Now we can bound $\mathscr{R}_2$ by
	\begin{align*}
		\mathscr{R}_2&\leq\sum_{\substack{p_1,l\\x^c<p_1\leq x^{\alpha}\\x^{1-\alpha}(\log x)^{-B}<lp_1\leq x^{1-c}\\2|l}}\mathscr{A}_1(lp_1)+o(\pi(x))\\&\leq (16A_0+o(1))\frac{x}{((c-\varepsilon)\log x)^2}\sum_{\substack{p_1,l\\x^c<p_1\leq x^{\alpha}\\x^{1-\alpha}(\log x)^{-B}<lp_1\leq x^{1-c}\\2|l}}\frac{1}{lp_1}\prod_{p>2,p|l}\left(\frac{p-1}{p-2}\right).\tag{4.7}
	\end{align*}
	Define a multiplicative function $H(n)$ by
	\begin{align*}
		H(n):=\prod_{p>2,p|n}\left(\frac{p-1}{p-2}\right).
	\end{align*}
	The partial sum of $H(d)$ is well studied by La Bret\`{e}che, Pomerance and Tenenbaum [2] by
	employing the Selberg-Delange method: for any nonnegative integer $v$,
	\begin{align*}
		\sum_{\substack{d\leq y\\(d,a)=1}}H(2^vd)\sim \frac{A_1y}{2^{\varepsilon(a)}H(a)}\qquad(y\rightarrow\infty),
	\end{align*}
	where $\varepsilon(a)=1$ if $2|a$ and $\varepsilon(a)=0$ if $2\nmid a$, and $A_1$ is an absolute constant defined by
	\begin{align*}
		A_1:=\prod_{p>2}\left(1+\frac{1}{p(p-2)}\right)=\frac{1}{A_0}.
	\end{align*}
	Thus by partial summation, we have \begin{align*}
		\sum_{\substack{x^{1-\alpha}(\log x)^{-B}p_1^{-1}<l\leq x^{1-c}p_1^{-1}\\2|l}}\frac{H(l)}{l}&=	\sum_{\substack{x^{1-\alpha}(\log x)^{-B}p_1^{-1}2^{-1}<l\leq x^{1-c}p_1^{-1}2^{-1}}}\frac{H(l)}{2l}\\&=\left(\frac{A_1}{2}+o(1)\right)\int_{x^{1-\alpha}(\log x)^{-B}p_1^{-1}2^{-1}}^{x^{1-c}p_1^{-1}2^{-1}}\frac{1}{a}\mathrm{d}a\\&=\left(\frac{A_1}{2}+o(1)\right)(\alpha-c)\log x.
	\end{align*}
	Then (4.7) becomes
	\begin{align*}
		\mathscr{R}_2\leq \left(\frac{8(\alpha-c)}{(c-\varepsilon)^2}\log\frac{\alpha}{c}+o(1)\right)\pi(x).\tag{4.8}
	\end{align*}

	For $0<c<c_1<\alpha$, we thus have
	\begin{align*}
		\sum_{\substack{p\leq x\\ P^+_y(p-1)<P^+_y(p+1)}}1\geq (g_2(c,c_1,\alpha)+o(1))\pi(x),
	\end{align*}
	where
	\begin{align*}
		g_2(c,c_1,\alpha)&=\log\frac{\alpha}{c}-\frac{8(\alpha-c)}{(c-\varepsilon)^2}\log\frac{\alpha}{c}-\frac{1}{1-2c_1}\left(\log\frac{c_1}{c}\right)^2-\log\frac{\alpha}{c_1}\\&=\log\frac{c_1}{c}-\frac{8(\alpha-c)}{(c-\varepsilon)^2}\log\frac{\alpha}{c}-\frac{1}{1-2c_1}\left(\log\frac{c_1}{c}\right)^2\\&=\log\frac{c_1}{c}\Bigg(1-\frac{8(\alpha-c)}{(c-\varepsilon)^2}\frac{\log(\alpha/c)}{\log (c_1/c)}-\frac{1}{1-2c_1}\log\frac{c_1}{c}\Bigg)
	\end{align*}
	Suppose
	\begin{align*}
		0<\frac{\alpha-c}{c}<1,\quad \frac{1}{4}<c<c_1,\quad \frac{1}{2}-c_1<3(c_1-c).\tag{4.9}
	\end{align*}
	For $0<s<1$, we have $s/2<\log (1+s)\leq s$.
	Then we have
	\begin{align*}
		\frac{8(\alpha-c)}{(c-\varepsilon)^2}\frac{\log(\alpha/c)}{\log (c_1/c)}&\leq 	\frac{9(\alpha-c)}{c^2}\frac{\log(\alpha/c)}{\log (c_1/c)}\leq 72\frac{(\alpha-c)^2}{c(c_1-c)}\\&\leq 144\frac{(\alpha-c)^2}{(c_1-c)^2}\log\frac{c_1}{c}\leq 2304\log\frac{c_1}{c}
	\end{align*}
	since
	\begin{align*}
		\frac{(\alpha-c)^2}{(c_1-c)^2}=\left(1+\frac{\alpha-c_1}{c_1-c}\right)^2\leq 16
	\end{align*}
	Fix $c_1$. Taking
	\begin{align*}
		\log \frac{c_1}{c}=\frac{1}{2(2304+1/(1-2c_1))}\quad \text{or}\quad c=c_1\exp\left(-\frac{1}{2(2304+1/(1-2c_1))}\right),
	\end{align*}
	we have
	\begin{align*}
		\sup_{c\ \text{satisfies} \ (4.9)}g_2(c,c_1,\alpha)\geq \frac{1}{4(2304+1/(1-2c_1))}>0.\tag{4.10}
	\end{align*}
	For $\alpha=1/2$, there exists some \(c_1\in(0.45,1/2)\) satisfying
	\begin{align*}
		f(c_1,\alpha):=4c_1-3c_1\exp\left(-\frac{1}{2(2304+1/(1-2c_1))}\right)-\frac{1}{2}>0,
	\end{align*}
	since (we take limits in the relevant expressions)
	\begin{align*}
		f(\alpha,\alpha)=0,\quad \frac{\partial}{\partial c_1}f(c_1,\alpha)\Big|_{c_1=\alpha}&=-\frac{1}{2}.
	\end{align*}
	Thus we can find \(c_1\in(0.45,1/2)\) satisfying \(1/2-c_1<3(c_1-c)\). All conditions in (4.9) can be satisfied.
	Then the above expression (4.10) holds for all \(\alpha \in(c_1,1/2]\).
	
	The above may be summarised as follows: We can find $\delta>0$ such that for any $1/2-\delta\leq \alpha\leq 1/2$, there exists $j_1(\alpha)>0$ satisfying
	\begin{align*}
		\inf_{1/2-\delta\leq \alpha\leq 1/2}j_1(\alpha)>0,\quad\sum_{\substack{p\leq x\\P^+_y(p-1)<P^+_y(p+1)}}1\geq (j_1(\alpha)+o(1))\pi(x).\tag{4.11}
	\end{align*}

	\subsection{Case 3: $1/2\leq\alpha< 1/2+\delta'$}
	We start from the following expression
	\begin{align*}
		\sum_{\substack{p\leq x\\ P^+_y(p-1)<P^+_y(p+1)}}1&\geq \sum_{\substack{p\leq x\\ P^+_y(p+1)>x^{c}}}1-\sum_{\substack{p\leq x\\ x^{c}<P^+_y(p+1)<P^+_y(p-1)}}1\\&\geq \sum_{\substack{p\leq x\\ P^+_y(p+1)>x^{c}}}1-\sum_{\substack{p\leq x\\ x^{c}<P^+_y(p+1)<P^+_y(p-1)<x^{c_1}}}1-\sum_{\substack{P^+_y(p-1)\geq x^{c_1}}}1\\&\geq \sum_{\substack{p\leq x\\ (p+1,P(x^{c},x^{1/2}))>1}}1-\sum_{x^c<p_1<p_2<x^{c_1}}\sum_{\substack{p\leq x\\p+1\equiv 0\mkern-15mu\pmod{p_1}\\p-1\equiv 0\mkern-15mu\pmod{p_2}}}1\\&\quad-\sum_{x^{c_1}<p_1\leq x^{1/2}}\sum_{\substack{p\leq x\\p-1\equiv 0\mkern-15mu\pmod{p_1}}}1-\sum_{x^{1/2}<p_1\leq x^{\alpha}}\sum_{\substack{p\leq x\\p-1\equiv 0\mkern-15mu\pmod{p_1}}}1.
	\end{align*}
	where $0<c<c_1<1/2\leq\alpha< 17/32$.
	From the arguments in Case 2, we obtain that there exist \(c, c_1\) satisfying the requirements such that
	\begin{align*}
		&\sum_{\substack{p\leq x\\ (p+1,P(x^{c},x^{1/2}))>1}}1-\sum_{x^c<p_1<p_2<x^{c_1}}\sum_{\substack{p\leq x\\p+1\equiv 0\mkern-15mu\pmod{p_1}\\p-1\equiv 0\mkern-15mu\pmod{p_2}}}1-\sum_{x^{c_1}<p_1\leq x^{1/2}}\sum_{\substack{p\leq x\\p-1\equiv 0\mkern-15mu\pmod{p_1}}}1\\&\geq (j_1(1/2)+o(1))\pi(x),
	\end{align*}
	where $j_1(1/2)>0$ is given in (4.11). Now it remains to estimate the final summation. By the Brun-Titchmarsh inequity, we have
	\begin{align*}
		\sum_{x^{1/2}<p_1\leq x^{\alpha}}\sum_{\substack{p\leq x\\p-1\equiv 0\mkern-15mu\pmod{p_1}}}1\leq \left(\frac{2}{1-\alpha}+o(1)\right)\sum_{x^{1/2}<p_1\leq x^{\alpha}}\frac{\pi(x)}{p_1}=\left(\frac{2\log(2\alpha)}{1-\alpha}+o(1)\right)\pi(x),
	\end{align*}
	Thus we have
	\begin{align*}
		\sum_{\substack{p\leq x\\ P^+_y(p-1)<P^+_y(p+1)}}1\geq\left(j_2(\alpha)+o(1)\right)\pi(x),\tag{4.12}
	\end{align*}
	where $1/2\leq\alpha< 1/2+\delta'$ with $\delta'=j_1(1/2)/24$,
	\begin{align*}
		j_2(\alpha)=j_1(1/2)-\frac{2\log(2\alpha)}{1-\alpha}\geq j_1(1/2)-6(2\alpha-1)>j_1(1/2)/2>0.
	\end{align*}
	
	\subsection{Case 4: $1/2+\delta'<\alpha< 17/32$}
	We start from the following expression
	\begin{align*}
		\sum_{\substack{p\leq x\\ P^+_y(p-1)<P^+_y(p+1)}}1\geq \sum_{\substack{p\leq x\\ P^+_y(p+1)>x^{c}}}1-\sum_{\substack{p\leq x\\ x^{c}<P^+_y(p+1)<P^+_y(p-1)}}1=\mathscr{S}_A-\mathscr{S}_B,\tag{4.13}
	\end{align*}
	where $1/2+\varepsilon<c<\alpha$, $1/2+\delta'<\alpha<17/32$, for the $\delta'$ in Case 3.
	For $\mathscr{S}_A$, by Lemma 2.5, we have
	\begin{align*}
		\mathscr{S}_A=\sum_{x^c<p_1\leq x^\alpha}\sum_{\substack{p\leq x\\p+1\equiv 0\mkern-15mu\pmod{p_1}}}1&\geq (C(\alpha)+o(1))\sum_{x^c<p_1\leq x^\alpha}\frac{\pi(x)}{p_1-1}\\&=\left(C(\alpha)\log\frac{\alpha}{c}+o(1)\right)\pi(x),\tag{4.14}
	\end{align*}
	where $C(\alpha)=C_1(\alpha)$ for the $C_1(\cdot)$ in Lemma 2.5.
	
	Now we begin to estimate $\mathscr{S}_B$. Consider $x(\log x)^{-B}<p\leq x$. We write $p+1=2ap_1$ and $p-1=2bp_2$ with $x^{1-\alpha}(\log x)^{-B}/2<b<a<x^{1-c}/2$, $6|ab$ and $(a,b)=1$. 
	According to the Chinese Remainder Theorem, this system $2a|(n+1)$ and $2b|(n-1)$ has a unique solution $n_0$ with $0<n_0<2ab$.
	All the solutions of this system are given by
	\begin{align*}
		n_0+2abm,\quad m\in\mathbb{Z}.
	\end{align*}
	Define
	\begin{align*}
		\mathscr{A}_2(ab):=\#\{&0\leq m\leq (x-n_0)/(2ab):(n_0+1)/(2a)+bm,\\& n_0+2abm\ \text{and}\ (n_0-1)/(2b)+am\ \text{are primes} \}.
	\end{align*}
	By Lemma 2.1, we have
	\begin{align*}
		\mathscr{A}_2(ab)\leq (24+o(1))\prod_{p}\left(1-\frac{\omega_1(p)-1}{p-1}\right)\left(1-\frac{1}{p}\right)^{-2}\frac{x}{ab\log^3(x/(2ab))},
	\end{align*}
	where $\omega_1(p)$ denote the number of solutions in $m$ modulo $p$ of 
	\begin{align*}
		\left(\frac{n_0+1}{2a}+bm\right)(n_0+2abm)\left(\frac{n_0-1}{2b}+am\right)\equiv 0\mkern-15mu\pmod{p}.
	\end{align*}
	We have $\omega_1(p)=1$ if $p|ab$, and $\omega(p)=3$ if $(ab,p)=1$. Then
	\begin{align*}
		\prod_{p}\left(1-\frac{\omega_1(p)-1}{p-1}\right)\left(1-\frac{1}{p}\right)^{-2}=9A_0'\prod_{p>3,p|ab}\left(\frac{p-1}{p-3}\right),
	\end{align*}
	where
	\begin{align*}
		A_0'=\prod_{p>3}\frac{p^2(p-3)}{(p-1)^3}
	\end{align*}
	Thus $\mathscr{S}_B$ becomes\begin{align*}
		\mathscr{S}_B&\leq \sum_{\substack{x^{1-\alpha}(\log x)^{-B}/2<b<a<x^{1-c}/2\\6|ab, (a,b)=1}}\mathscr{A}_2(ab)+o(\pi(x))\\&\leq (216A_0'+o(1))\frac{x}{((2c-1-\varepsilon)\log x)^3}\sum_{\substack{x^{1-\alpha}(\log x)^{-B}/2<b<a<x^{1-c}/2\\6|ab, (a,b)=1}}\frac{1}{ab}\prod_{p>3,p|ab}\left(\frac{p-1}{p-3}\right).\tag{4.16}
	\end{align*}
	We can rewrite $6|ab$ as $6|a$ or $6|b$ or $2|a$ and $3|b$ or $3|a$ and $2|b$.
	Define a multiplicative function $H_1(n)$ by
	\begin{align*}
		H_1(n):=\prod_{p>3,p|n}\left(\frac{p-1}{p-3}\right).
	\end{align*}
	Note that
	\begin{align*}
		\sum_{\substack{n\geq 1\\(n,a)=1}}\frac{H_1(n)}{n^s}=\zeta(s)G_a(s)\left(1-\frac{\varepsilon_1(a)}{2^s}\right)\left(1-\frac{\varepsilon_2(a)}{3^s}\right)\prod_{\substack{p|a,p>3}}\left(1-\frac{1}{p^s}\right)
	\end{align*}
	where $\varepsilon_1(a)=1_{2|a}$ and $\varepsilon_2(a)=1_{3|a}$,
	\begin{align*}
		G_a(s)=\prod_{\substack{p>3\\(p,a)=1}}\left(1+\frac{2}{p^s(p-3)}\right).
	\end{align*}
	The functions $G_a$ can be analytically
	continued to the half-plane $Re\ s > 0$. By a standard treatment,
	for $y\gg a$ and $y\rightarrow \infty$, we have
	\begin{align*}
		\sum_{\substack{d\leq y\\(d,a)=1}}H_1(d)=(1+o(1))\frac{A_1'y}{2^{\varepsilon_1(a)}(3/2)^{\varepsilon_2(a)}H_1'(a)},
	\end{align*}
	where 
	\begin{align*}
		A_1'=\prod_{p>3}\frac{(p-1)(p-2)}{p(p-3)},\quad H_1'(a)=\prod_{p>3,p|a}\left(\frac{p-2}{p-3}\right).
	\end{align*}
	Then we have (also see \S 4.2)
	\begin{align*}
		&\sum_{\substack{x^{1-\alpha}(\log x)^{-B}/2<b<a<x^{1-c}/2\\6|a, (a,b)=1}}\frac{H_1(ab)}{ab}\\&=\sum_{\substack{x^{1-\alpha}(\log x)^{-B}/2<b<x^{1-c}/2\\(b,6)=1}}\frac{H_1(b)}{6b}\sum_{\substack{b/6<a<x^{1-c}/12\\(a,b)=1}}\frac{H_1(a)}{a}\\&=\left(\frac{A'_1}{6}+o(1)\right)\sum_{\substack{x^{1-\alpha}(\log x)^{-B}/2<b<x^{1-c}/2\\(b,6)=1}}\frac{H_1(b)}{bH_1'(b)}\int_{b/6}^{x^{1-c}/12}\frac{1}{a}\mathrm{d}a\\&=\left(\frac{A'_1}{6}+o(1)\right)\sum_{\substack{x^{1-\alpha}(\log x)^{-B}/2<b<x^{1-c}/2\\(b,6)=1}}\frac{H(b)}{b}\log (x^{1-c}/(2b))\\&=\left(\frac{A'_1}{6}\frac{A_1}{2H(6)}+o(1)\right)\int_{x^{1-\alpha}(\log x)^{-B}/2}^{x^{1-c}/2}\frac{\log (x^{1-c}/(2b))}{b}\mathrm{d}b
		\\&=\left(\frac{(\alpha-c)^2}{18A_0'}+o(1)\right)\log^2 x,
	\end{align*}
	where $A_1'A_1=4/(3A_0')$,
	\begin{align*}
		H(n)=\prod_{p>2,p|n}\left(\frac{p-1}{p-2}\right),\quad A_1=\prod_{p>2}\left(1+\frac{1}{p(p-2)}\right).
	\end{align*} Similarly, we have
	\begin{align*}
		&\sum_{\substack{x^{1-\alpha}(\log x)^{-B}/2<b<a<x^{1-c}/2\\6|b, (a,b)=1}}\frac{H_1(ab)}{ab}=\left(\frac{(\alpha-c)^2}{18A_0'}+o(1)\right)\log^2 x,\\&\sum_{\substack{x^{1-\alpha}(\log x)^{-B}/2<b<a<x^{1-c}/2\\2|a,3|b, (a,b)=1}}\frac{H_1(ab)}{ab}=\left(\frac{(\alpha-c)^2}{18A_0'}+o(1)\right)\log^2 x,\\&\sum_{\substack{x^{1-\alpha}(\log x)^{-B}/2<b<a<x^{1-c}/2\\3|a,2|b, (a,b)=1}}\frac{H_1(ab)}{ab}=\left(\frac{(\alpha-c)^2}{18A_0'}+o(1)\right)\log^2 x.
	\end{align*}
	Then (4.16) becomes
	\begin{align*}
		\mathscr{S}_B&\leq (216A_0'+o(1))\frac{x}{((2c-1-\varepsilon)\log x)^3}\sum_{\substack{x^{1-\alpha}(\log x)^{-B}/2<b<a<x^{1-c}/2\\6|ab, (a,b)=1}}\frac{H_1(ab)}{ab}\\&= \left(\frac{48(\alpha-c)^2}{(2c-1-\varepsilon)^3}+o(1)\right)\pi(x).\tag{4.17}
	\end{align*}
	Conclude from (4.13), (4.14) and (4.17) that
	\begin{align*}
		\sum_{\substack{p\leq x\\ P^+_y(p-1)<P^+_y(p+1)}}1\geq (g_3(c,\alpha)+o(1))\pi(x),
	\end{align*}
	where $1/2+\delta'\leq \alpha<17/32$, $1/2+\varepsilon<c<\alpha$,
	\begin{align*}
		g_3(c,\alpha)=C(\alpha)\log\frac{\alpha}{c}-\frac{48(\alpha-c)^2}{(2c-1-\varepsilon)^3}
	\end{align*}
	Consider $1/2+c_0<c<\alpha$, where $c_0$ satisfies
	\begin{align*}
		c_0>0,\quad c_0+\frac{C(\alpha)(2c_0-\varepsilon)^3}{96}<\delta'.
	\end{align*}Then we have
	\begin{align*}
		g_3(c,\alpha)\geq C(\alpha)(\alpha-c)-\frac{48}{(2c_0-\varepsilon)^3}(\alpha-c)^2.
	\end{align*}
	Taking
	\begin{align*}
		\alpha-c=\frac{C(\alpha)(2c_0-\varepsilon)^3}{96},
	\end{align*}
	we have $c\in(1/2+c_0,\alpha)$ and
	\begin{align*}
		g_3(c,\alpha)\geq \frac{C(\alpha)^2(2c_0-\varepsilon)^3}{192}.
	\end{align*}
	
	The above may be summarised as follows: For any $1/2+\delta'\leq \alpha<17/32$, there exists $h_2(\alpha)$ such that
	\begin{align*}
		\sum_{\substack{p\leq x\\ P^+_y(p-1)<P^+_y(p+1)}}1\geq (h_2(\alpha)+o(1))\pi(x),\tag{4.18}
	\end{align*}
	where
	\begin{align*}
		h_2(\alpha)=\frac{C(\alpha)^2(2c_0-\varepsilon)^3}{192}>0.
	\end{align*}
	In particular, the function \(h_2\) is monotonically decreasing.
	
	Combining all the above cases, we complete the proof of Theorem 1.6.

	\section*{Acknowledegements}

\end{document}